\def\bsg{{\boldsymbol{g}}}
\def\bsv{{\boldsymbol{v}}}
\def\bsx{{\boldsymbol{x}}}
\def\bsH{{\boldsymbol{H}}}
\def\bsK{{\boldsymbol{K}}}
\def\bsL{{\boldsymbol{L}}}
\def\bsM{{\boldsymbol{M}}}
\def\bsQ{{\boldsymbol{Q}}}
\newtheorem{theorem}{Theorem}
\newtheorem{assumption}{Assumption}
\newtheorem{lemma}{Lemma}
\newtheorem{definition}{Definition}
\newtheorem{remark}{Remark}
\def\sgn{\mathop{\rm sgn}}
\newcommand{\diag}{{\rm diag}}
\DeclareMathOperator{\col}{col}
\DeclareMathOperator{\Deg}{Deg}
\newcommand{\red}[1]{{\color{red} #1}}
\begin{document}

\IEEEoverridecommandlockouts
\title{\bf Accelerated Zeroth-order Algorithm for Stochastic Distributed Nonconvex Optimization}


\author{Shengjun~Zhang, Colleen~P.~Bailey \thanks{
		Shengjun~Zhang and Colleen~P.~Bailey are with OSCAR Laboratory, Department of Electrical Engineering, University of North Texas, Denton, TX 76207 USA, email: {\tt\small ShengjunZhang@my.unt.edu}, {\tt\small Colleen.Bailey@unt.edu}.}}

\maketitle
\begin{abstract}
This paper investigates how to accelerate the convergence of distributed optimization algorithms on nonconvex problems with zeroth-order information available only.
We propose a zeroth-order~(ZO) distributed primal-dual stochastic coordinates algorithm equipped with ``powerball'' method to accelerate.
We prove that the proposed algorithm has a convergence rate of $\mathcal{O}(\sqrt{p}/\sqrt{nT})$ for general nonconvex cost functions.
We consider solving the generation of adversarial examples from black-box DNNs problem to compare with the existing state-of-the-art centralized and distributed ZO algorithms.
The numerical results demonstrate the faster convergence rate of the proposed algorithm and match the theoretical analysis.

\end{abstract}


\section{Introduction\label{zo_pb:sec:Introduction}}
In this paper, we focus on solving the following stochastic distributed nonconvex optimization problems
\begin{align}\label{zo_pb:eqn:xopt}
 \min_{x\in \mathbb{R}^p} f(x)=\frac{1}{n}\sum_{i=1}^n\mathbb{E}_{\xi_i}[F_i(x,\xi_i)],
\end{align}
where $n$ is the total number of agents, $x$ is the decision variable, $\xi_i$ is a random variable with dimension $p$, and $F_i(\cdot,\xi_i): \mathbb{R}^{p}\rightarrow \mathbb{R}$ is the stochastic function.
Various algorithms utilizing gradient information aiming to solve such problems in the form of~\eqref{zo_pb:eqn:xopt} have been proposed and applied to many applications.
However, in many realistic problems, it is unable or too expensive to achieve the gradient information \cite{conn2009introduction,audet2017derivative,larson2019derivative}.
For example, the simulation based optimization problems \cite{spall2005introduction}, the black-box universal attacking of deep neural networks problems \cite{goodfellow2014explaining, liu2018zeroth, chen2019zo}, just to name a few.
Because of the unavailability of gradient information, we consider that agent $i$ is only able to access its own stochastic ZO information $F_i(x,\xi_i)$.
For each agent $i$, the local cost function $f_i(x)$ is the expectation of the ZO information $\mathbb{E}_{\xi_i}[F_i(x,\xi_i)]$.
Agents communicate with their neighbors via an undirected communication network graph $\mathcal G$.

In recent years, distributed ZO optimization problems obtained more and more attention and have been applied into networked \cite{yang2019survey}.
\cite{yuan2014randomized,sahu2018distributed,wang2019distributed,
pang2019randomized} consider distributed gradient descent methods with ZO information.
\cite{yuan2015gradient} focus on applying the push-sum technique in distributed ZO optimization in order to handle direct communication between agents.
\cite{yu2019distributed} provided distributed ZO mirror descent algorithm.
\cite{tang2020distributedzero} utilized the gradient tracking technique in distributed ZO optimization problems.
\cite{beznosikov2019derivative} proposed distributed ZO sliding algorithms.
\cite{hajinezhad2019zone,yi2019linear,yi2021zerothorder,zhang2021convergence} combined primal--dual techniques and ZO information.

However, under the stochastic distributed settings in the exact form of~\eqref{zo_pb:eqn:xopt}, only a few works \cite{hajinezhad2019zone,yi2021zerothorder,zhang2021convergence} exist in the literature. 
ZONE-M in \cite{hajinezhad2019zone} achieves the convergence rate of $\mathcal{O}(p^2n/T)$ with a a very high sampling size of $\mathcal{O}(T)$ per iteration.
ZODPDA in \cite{yi2021zerothorder} achieves the convergence rate of $\mathcal{O}(\sqrt{p}/\sqrt{nT})$, which is the best known convergence rate so far.
ZODIAC in \cite{zhang2021convergence} has the convergence rate of $\mathcal{O}(\sqrt{p}/\sqrt{T})$.
Both ZODPDA and ZODIAC have $2n$ points sampled per iteration, which are more suitable for high dimensional decision variables cases.

The contributions of this work are summarized in the following:
\subsubsection{}
We propose an accelerated ZO algorithm based on primal--dual framework for stochastic distributed nonconvex optimization problems and prove the convergence rate of $\mathcal{O}(\sqrt{p}/\sqrt{nT})$.
To our best knowledge, the proposed algorithm is the first accelerated method in distributed ZO optimization literature.
\subsubsection{}
ZODIAC in \cite{zhang2021convergence} can be considered as a special case in the proposed algorithm.
We theoretically improve the convergence rate of ZODIAC from $\mathcal{O}(\sqrt{p}/\sqrt{T})$ to $\mathcal{O}(\sqrt{p}/\sqrt{nT})$.
\subsubsection{}
Extensive numerical examples are provided to demostrate the efficacy of the considered algorithm through benchmark examples on a large-scale agents systems.
The rest of this paper is organized as follows.
Section~\ref{zo_pb:sec-preliminary} introduces some preliminary concepts. Sections~\ref{zo_pb:sec-alg} introduces the proposed algorithm and analyzes its convergence properties. Simulations are presented in Section~\ref{zo_pb:sec:exp}. Finally, concluding remarks are offered in Section~\ref{zo_pb:sec:conclusions}.

\noindent {\bf Notations}: $\mathbb{N}_0$ and $\mathbb{N}_+$ denote the set of nonnegative and positive integers, respectively. $[n]$ denotes the set $\{1,\dots,n\}$ for any $n\in\mathbb{N}_+$.
$\|\cdot\|$ represents the Euclidean norm for vectors or the induced 2-norm for matrices. $\mathbb{B}^p$ and $\mathbb{S}^p$ are the unit ball and sphere centered around the origin in $\mathbb{R}^p$ under Euclidean norm, respectively. Given a differentiable function $f$, $\nabla f$ denotes the gradient of $f$.
${\bm 1}_n$ (${\bm 0}_n$) denotes the column one (zero) vector of dimension $n$. $\col(z_1,\dots,z_k)$ is the concatenated column vector of vectors $z_i\in\mathbb{R}^{p_i},~i\in[k]$. ${\bm I}_n$ is the $n$-dimensional identity matrix. Given a vector $[x_1,\dots,x_n]^\top\in\mathbb{R}^n$, $\diag([x_1,\dots,x_n])$ is a diagonal matrix with the $i$-th diagonal element being $x_i$.  The notation $A\otimes B$ denotes the Kronecker product
of matrices $A$ and $B$. Moreover, we denote $\bsx=\col(x_1,\dots,x_n)$, $\bar{x}=\frac{1}{n}({\bm 1}_n^\top\otimes{\bm I}_p)\bsx$, $\bar{\bsx}={\bm 1}_n\otimes\bar{x}$.
$\rho(\cdot)$ stands for the spectral radius for matrices and $\rho_2(\cdot)$ indicates the minimum
positive eigenvalue for matrices having positive eigenvalues.

\section{Preliminaries}\label{zo_pb:sec-preliminary}
The following section discusses some background on graph theory, smooth functions, the gradient
estimator, and additional assumptions used in this paper.

\subsection{Graph Theory}
Agents communicate with their neighbors through an underlying network, which is modeled by an undirected graph $\mathcal G=(\mathcal V,\mathcal E)$, where $\mathcal V =\{1,\dots,n\}$ is the agent set, $\mathcal E
\subseteq \mathcal V \times \mathcal V$ is the edge set, and $(i,j)\in \mathcal E$ if agents $i$ and $j$ can communicate with each other.
For an undirected graph $\mathcal G=(\mathcal V,\mathcal E)$, let $\mathcal{A}=(a_{ij})$ be the associated weighted adjacency matrix with $a_{ij}>0$ if $(i,j)\in \mathcal E$ if $a_{ij}>0$ and zero otherwise. It is assumed that $a_{ii}=0$ for all $i\in [n]$. Let $\deg_i=\sum\limits_{j=1}^{n}a_{ij}$ denotes the weighted degree of vertex $i$. The degree matrix of graph $\mathcal G$ is $\Deg=\diag([\deg_1, \cdots, \deg_n])$. The Laplacian matrix is $L=(L_{ij})=\Deg-\mathcal{A}$. Additionally, we denote $K_n={\bm I}_n-\frac{1}{n}{\bm 1}_n{\bm 1}^{\top}_n$, $\bsL=L\otimes {\bm I}_p$, $\bsK=K_n\otimes {\bm I}_p$, $\bsH=\frac{1}{n}({\bm 1}_n{\bm 1}_n^\top\otimes{\bm I}_p)$. Moreover, from Lemmas~1 and 2 in \cite{Yi2018distributed}, we know there exists an orthogonal matrix $[r \ R]\in \mathbb{R}^{n \times n}$ with $r=\frac{1}{\sqrt{n}}\mathbf{1}_n$ and $R \in \mathbb{R}^{n\times (n-1)}$ such that $R\Lambda_1^{-1}R^{\top}L=LR\Lambda_1^{-1}R^{\top}=K_n$, and $\frac{1}{\rho(L)}K_n\leq R\Lambda_1^{-1}R^{\top}\le\frac{1}{\rho_2(L)}K_n$, where $\Lambda_1=\diag([\lambda_2,\dots,\lambda_n])$ with $0<\lambda_2\leq\dots\leq\lambda_n$ being the eigenvalues of the Laplacian matrix $L$.

\subsection{Smooth Function}
\begin{definition}
A function $f(x):~\mathbb{R}^p\mapsto\mathbb{R}$ is smooth with constant $L_f>0$ if it is differentiable and
\begin{align}\label{zo_pb:nonconvex:smooth}
\|\nabla f(x)-\nabla f(y)\|\le L_{f}\|x-y\|,~\forall x,y\in \mathbb{R}^p.
\end{align}
\end{definition}

\subsection{Gradient Approximation}

Denote a random subset of the coordinates $\mathcal{S} \subseteq \{1, 2, \dots, p\}$ where the cardinality of $\mathcal{S}$ is $|\mathcal{S}| = n_{c}$. 
We provide two options of gradient approximation, denoted $g^e_{i}$ and defined by~\eqref{zo_pb:gradient:model2-st} and~\eqref{zo_pb:gradient:model2-st2}.

\begin{align}
&g^e_{i}=\frac{p}{n_{c}}\sum_{i\in \mathcal{S}}\frac{(F(x+\delta_{i} e_{i},\xi)-F(x,\xi))}{\delta_{i}}e_{i}
\label{zo_pb:gradient:model2-st}
\end{align}

\begin{align}
&g^e_{i}=\frac{p}{n_{c}}\sum_{i\in \mathcal{S}}\frac{(F(x+\delta_{i} e_{i},\xi)-F(x-\delta_{i} e_{i},\xi))}{2\delta_{i}}e_{i}
\label{zo_pb:gradient:model2-st2}
\end{align}

The coordinates are sampled uniformly, i.e. $\text{Pr}(i\in \mathcal{S}) = n_{c}/p$, which guarantees that both~\eqref{zo_pb:gradient:model2-st} and~\eqref{zo_pb:gradient:model2-st2} are \textit{unbiased} estimators of the \textit{full} coordinate gradient estimator $\sum_{i=1}^{d}\frac{(F(x+\delta_{i} e_{i},\xi)-F(x-\delta_{i} e_{i},\xi))}{2\delta_{i}}e_{i}$\cite{sharma2020zeroth}.

\subsection{Powerball Term}

Define the function 
\begin{equation}\label{zo_pb:pb:def}
\sigma(x, \gamma) = \sgn(x) |x|^{\gamma}
\end{equation}
where $\gamma \in [\frac{1}{2}, 1]$.
Note that when $\gamma = 1$, $\sigma(x, 1)$ reduces to $x$.
Unlike the ``powerball'' terms in \cite{zhou2020pbsgd} and \cite{yuan2019powerball}, under distributed settings, the range of $\gamma$ has to be modified \cite{zhang2021accelerated}.
\subsection{Assumptions}

\begin{assumption}\label{zo_pb:ass:graph}
The undirected graph $\mathcal G$ is connected.
\end{assumption}

\begin{assumption}\label{zo_pb:ass:optset}
The optimal set $\mathbb{X}^*$ is nonempty and the optimal value $f^*>-\infty$.
\end{assumption}

\begin{assumption}\label{zo_pb:ass:zeroth-smooth}
For almost all $\xi_i$, the stochastic ZO oracle $F_i(\cdot,\xi_i)$ is smooth with constant $L_f>0$.
\end{assumption}

\begin{assumption}\label{zo_pb:ass:zeroth-variance}
The stochastic gradient $\nabla_xF_i(x,\xi_i)$ has bounded variance for any $j$th coordinate of $x$, i.e., there exists $\zeta\in\mathbb{R}$ such that $\mathbb{E}_{\xi_i}[(\nabla_xF_i(x,\xi_i)-\nabla f_i(x))_{j}^2]\le\zeta^2,~\forall i\in[n],~\forall j\in[p],~\forall x\in\mathbb{R}^p$. It also implies that $\mathbb{E}_{\xi_i}[\|\nabla_xF_i(x,\xi_i)-\nabla f_i(x)\|^2]\le\sigma^2_1\triangleq p \zeta^2,~\forall i\in[n],~\forall x\in\mathbb{R}^p$.
\end{assumption}

\begin{assumption}\label{zo_pb:ass:fig}
Local cost functions are similar, i.e.,
there exists $\sigma_2\in\mathbb{R}$ such that $\|\nabla f_i(x)-\nabla f(x)\|^2\le\sigma^2_2,~\forall i\in[n],~\forall x\in\mathbb{R}^p$.
\end{assumption}


\section{Algorithm}\label{zo_pb:sec-alg}

\subsection{Algorithm Description}
We consider the novel distributed primal-dual framework in~\cite{zhang2021convergence} and apply the ``powerball'' term described in~\eqref{zo_pb:pb:def} directly on the estimations of gradient.
We summarize the proposed method ZODIAC-PB as Algorithm~\ref{zo_pb:algorithm-random-pd}.

\begin{algorithm}[!ht]
\caption{ZODIAC-PB}
\label{zo_pb:algorithm-random-pd}
\begin{algorithmic}[1]
\STATE \textbf{Input}: positive number $\alpha$, $\beta$, $\eta$, and positive sequences $\{\delta_{i,k}\}$, $\gamma$.
\STATE \textbf{Initialize}: $ x_{i,0}\in\mathbb{R}^p$ and $v_{i,0}={\bm 0}_p,~
\forall i\in[n]$.
\FOR{$k=0,1,\dots$}
\FOR{$i=1,\dots,n$  in parallel}
\STATE  Broadcast $x_{i,k}$ to $\mathcal{N}_i$ and receive $x_{j,k}$ from $j\in\mathcal{N}_i$;
\STATE Select coordinates independently and uniformly;
\STATE Select $\xi_{i,k}$ independently;\\
\STATE \textbf{Option 1:} sample $F_i(x_{i,k}+\delta_{i,k}e_{i,k},\xi_{i,k})$, and $F_i(x_{i,k},\xi_{i,k})$;
\STATE  Update $x_{i,k+1}$ by \eqref{zo_pb:alg:random-pd-x} with~\eqref{zo_pb:gradient:model2-st};
\STATE \textbf{Option 2:} sample  $F_i(x_{i,k}+\delta_{i,k}e_{i,k},\xi_{i,k})$ and $F_i(x_{i,k}-\delta_{i,k}e_{i,k},\xi_{i,k})$;
\STATE  Update $x_{i,k+1}$ by \eqref{zo_pb:alg:random-pd-x} with~\eqref{zo_pb:gradient:model2-st2};
\STATE  Update $v_{i,k+1}$ by \eqref{zo_pb:alg:random-pd-q}.
\ENDFOR
\ENDFOR
\STATE  \textbf{Output}: $\{\bsx_{k}\}$.
\end{algorithmic}
\end{algorithm}

\begin{subequations}\label{zo_pb:alg:random-pd}
\begin{align}
x_{i,k+1} &= x_{i,k}-\eta\Big(\alpha\sum_{j=1}^nL_{ij}x_{j,k}+\beta v_{i,k}+\sigma(g^e_{i,k}, \gamma)\Big), \label{zo_pb:alg:random-pd-x}\\
v_{i,k+1} &=v_{i,k}+ \eta\beta\sum_{j=1}^n L_{ij}x_{j,k}, \nonumber \\
& \forall x_{i,0}\in\mathbb{R}^p, \sum_{j=1}^nv_{j,0}={\bm 0}_p,~\forall i\in[n]. \label{zo_pb:alg:random-pd-q}
\end{align}
\end{subequations}

\subsection{Convergence Analysis}

\begin{theorem}\label{zo_pb:thm-sg-smT}
Suppose Assumptions~\ref{zo_pb:ass:graph}--\ref{zo_pb:ass:fig} hold. For any given $T>n^3/p$, let $\{\bsx_k,k=0,\dots,T\}$ be the output generated by Algorithm~\ref{zo_pb:algorithm-random-pd} with
\begin{align}\label{zo_pb:step:eta2-sm}
&\alpha=\kappa_1\beta,~\beta=\frac{\kappa_2\sqrt{pT}}{\sqrt{n}},~ \eta=\frac{\kappa_2}{\beta},\nonumber\\
&\delta_{i,k}\le\frac{\kappa_\delta}{p^{\frac{1}{4}}n^{\frac{1}{4}}(k+1)^{\frac{1}{4}}},~\forall k\le T,
\end{align}
where $\kappa_1>\frac{1}{\rho_2(L)}+1$, $\kappa_2\in\Big(0,\min\{\frac{(\kappa_1-1)\rho_2(L)-1}{\rho(L)+(2\kappa_1^2+1)\rho(L^2)+1},\frac{1}{5}\}\Big)$,  and $\kappa_\delta>0$, then we have,
\begin{subequations}
\begin{align}
&\frac{1}{T}\sum_{k=0}^{T-1}\mathbb{E}[\|\nabla f(\bar{x}_k)\|_{1+\gamma}^2]
=\mathcal{O}(\frac{\sqrt{p}}{\sqrt{nT}})
+\mathcal{O}(\frac{n}{T}),\label{zo_pb:coro-sg-sm-equ3}\\
&\mathbb{E}[f(\bar{x}_{T})]-f^*=\mathcal{O}(1),\label{zo_pb:coro-sg-sm-equ4}\\
&\frac{1}{T}\sum_{k=0}^{T-1}\mathbb{E}\Big[\frac{1}{n}\sum_{i=1}^{n}\|x_{i,k}-\bar{x}_k\|^2\Big]
=\mathcal{O}(\frac{n}{T}).\label{zo_pb:coro-sg-sm-equ3.1}
\end{align}
\end{subequations}
\end{theorem}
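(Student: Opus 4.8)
\emph{Proof sketch.} The plan is to track a single Lyapunov function coupling the optimality gap of the averaged iterate $\bar{x}_k$, the consensus (disagreement) error, and the accumulated dual variable, and to show it decreases in expectation up to stochastic, finite-difference, and discretization residuals whose magnitudes are pinned down by the schedules for $\eta$, $\beta$, and $\delta_{i,k}$. First I would rewrite \eqref{zo_pb:alg:random-pd} in stacked form with $\bsx_k=\col(x_{1,k},\dots,x_{n,k})$, $\bsv_k=\col(v_{1,k},\dots,v_{n,k})$, and $\bsL=L\otimes{\bm I}_p$, and use the conservation law obtained by left-multiplying the dual recursion by ${\bm 1}_n^{\top}\otimes{\bm I}_p$: since every row and column of $L$ sums to zero and $\sum_i v_{i,0}={\bm 0}_p$, we have $\sum_i v_{i,k}={\bm 0}_p$ for all $k$, so both the Laplacian and dual terms drop out under averaging and the averaged dynamics collapse to $\bar{x}_{k+1}=\bar{x}_k-\tfrac{\eta}{n}\sum_{i=1}^n\sigma(g^e_{i,k},\gamma)$. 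I would then record the two defining properties of the coordinate estimator under Assumptions~\ref{zo_pb:ass:zeroth-smooth}--\ref{zo_pb:ass:zeroth-variance}: conditional unbiasedness of $g^e_{i,k}$ for $\nabla f_i(x_{i,k})$ up to a finite-difference bias of order $\delta_{i,k}\sqrt{p}\,L_f$, and a second-moment bound of order $\tfrac{p}{n_c}(\|\nabla f_i(x_{i,k})\|^2+\sigma_1^2)$ reflecting the coordinate-sampling inflation.

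To control the disagreement, I would project the primal-dual iteration onto the coordinates induced by the orthogonal factor $[r\ R]$ and use $R\Lambda_1^{-1}R^{\top}L=K_n$ together with $\tfrac{1}{\rho(L)}K_n\preceq R\Lambda_1^{-1}R^{\top}\preceq\tfrac{1}{\rho_2(L)}K_n$ from the Preliminaries. In these coordinates the disagreement component of $(\bsx_k,\bsv_k)$ obeys a linear recursion whose contraction is set by $\alpha=\kappa_1\beta$; the hypothesis $\kappa_1>1/\rho_2(L)+1$ stabilizes this mode and forces the dual variable to track the Laplacian-weighted primal average, while the upper bound on $\kappa_2$ (equivalently $\eta\beta=\kappa_2$ small) prevents the stochastic-gradient and powerball cross-terms from overwhelming the contraction. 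This produces a recursion for $\|\bsK\bsx_k\|^2=\sum_i\|x_{i,k}-\bar{x}_k\|^2$ and an associated dual-error term.

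Next I would apply $L_f$-smoothness along $\{\bar{x}_k\}$. The engine of the descent is the powerball identity $\langle\nabla f(\bar{x}_k),\sigma(\nabla f(\bar{x}_k),\gamma)\rangle=\|\nabla f(\bar{x}_k)\|_{1+\gamma}^{1+\gamma}$, which supplies the negative stationarity term in the $(1+\gamma)$-norm (and, via $1+\gamma\le2$ and a bounded-gradient argument along the trajectory, a positive multiple of $\|\nabla f(\bar{x}_k)\|_{1+\gamma}^2$ as in \eqref{zo_pb:coro-sg-sm-equ3}). The cross terms are the gap between $\tfrac{1}{n}\sum_i\sigma(g^e_{i,k},\gamma)$ and $\sigma(\nabla f(\bar{x}_k),\gamma)$, which I would split into (i) the estimation noise and bias from the first step and (ii) the consensus error from the second step, both transported through the map $\sigma(\cdot,\gamma)$. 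Setting $V_k=(f(\bar{x}_k)-f^*)+c_1\|\bsK\bsx_k\|^2+c_2(\text{dual error})$ and combining the three inequalities yields $\mathbb{E}[V_{k+1}]\le\mathbb{E}[V_k]-c\,\eta\,\mathbb{E}[\|\nabla f(\bar{x}_k)\|_{1+\gamma}^2]+\mathcal{O}(\eta^2 p\sigma_1^2/n)+\mathcal{O}(\eta\,p\,\delta_{i,k}^2)$. Telescoping over $k=0,\dots,T-1$, dividing by $\eta T$, and substituting $\eta=\sqrt{n}/\sqrt{pT}$, $\eta\beta=\kappa_2$, and $\sum_{k}\delta_{i,k}^2=\mathcal{O}(\sqrt{T}/\sqrt{pn})$ renders the initialization, averaged-variance, and bias residuals each $\mathcal{O}(\sqrt{p}/\sqrt{nT})$ and the consensus residual $\mathcal{O}(n/T)$, with $T>n^3/p$ guaranteeing the latter is genuinely lower order; this is \eqref{zo_pb:coro-sg-sm-equ3}. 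Then \eqref{zo_pb:coro-sg-sm-equ4} follows from nonnegativity of the stationarity term and boundedness of the telescoped sum, and \eqref{zo_pb:coro-sg-sm-equ3.1} is exactly the summed consensus recursion.

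The step I expect to be the main obstacle is (ii): the powerball map $\sigma(\cdot,\gamma)$ is nonlinear and, for $\gamma<1$, non-Lipschitz at the origin, so bounding $\|\sigma(a,\gamma)-\sigma(b,\gamma)\|$ in terms of the estimation and consensus errors requires Hölder/concavity inequalities that yield \emph{fractional} powers of those errors. Balancing these fractional-power residuals against the $\eta$ and $\delta_{i,k}$ schedules so that every induced term stays within $\mathcal{O}(\sqrt{p}/\sqrt{nT})+\mathcal{O}(n/T)$ rather than degrading the rate is the delicate part, and is presumably where the restricted range $\gamma\in[\tfrac12,1]$ and the tuned decay of $\delta_{i,k}$ are needed.
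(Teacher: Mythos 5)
Your overall architecture---stacked primal--dual dynamics, the conservation law $\sum_i v_{i,k}=\bm{0}_p$ collapsing the averaged iterate to $\bar{x}_{k+1}=\bar{x}_k-\tfrac{\eta}{n}\sum_i\sigma(g^e_{i,k},\gamma)$, the spectral factor $R\Lambda_1^{-1}R^{\top}$ for the disagreement mode, and a telescoped Lyapunov descent with the stated schedules---matches the paper's proof, which builds $W_k=W_{1,k}+W_{2,k}+W_{3,k}+W_{4,k}$, proves $\mathbb{E}[W_{k+1}]\le W_k-(\text{negative terms})+\mathcal{O}(np)\eta^2+\mathcal{O}(np^2)\eta\delta_k^2$ in Lemma~\ref{zo_pb:lemma:sg2-T}, shows $W_k\ge 0$, and sums. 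However, your sketch has two gaps at exactly the places where the paper's construction does specific work. First, your descent engine is the identity $\langle\nabla f(\bar{x}_k),\sigma(\nabla f(\bar{x}_k),\gamma)\rangle=\|\nabla f(\bar{x}_k)\|_{1+\gamma}^{1+\gamma}$, but the update contains $\sigma(g^e_{i,k},\gamma)$, and $\sigma(\cdot,\gamma)$ does not commute with the expectation, so you cannot pass from $\mathbb{E}[\langle\nabla f(\bar{x}_k),\sigma(g^e_{i,k},\gamma)\rangle]$ to that identity without bounding $\|\sigma(a,\gamma)-\sigma(b,\gamma)\|$ --- the non-Lipschitz comparison you yourself flag as the ``main obstacle,'' and which you leave unresolved; on top of that, the identity yields the power $1+\gamma$ rather than $2$, and your proposed fix (a bounded-gradient argument along the trajectory) invokes an assumption the theorem does not grant. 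The paper routes around this entirely: the powerball enters only through Lemma~\ref{zo_pb:lemma:pb}, $\|\sigma(g^e_i,\gamma)\|^2\le\|g^e_i\|^2_{1+\gamma}$, i.e.\ only in the \emph{second-moment} (variance) terms fed into Lemma~\ref{zo_pb:zerosg:lemma:grad-st}, while the first-order descent term in the $W_{4,k}$ recursion is kept as the plain inner product $-\eta(\bar{\bsg}_k^s)^{\top}\bar{\bsg}_k^0$ using unbiasedness of the raw estimator. Whatever one thinks of that device, it is a materially different treatment of the nonlinearity, and your version as written does not close.

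Second, your Lyapunov function $V_k=(f(\bar{x}_k)-f^*)+c_1\|\bsK\bsx_k\|^2+c_2(\text{dual error})$ omits the two ingredients that make the paper's telescoping exact: the dual error must be measured on the \emph{shifted} variable $\bsv_k+\tfrac{1}{\beta}\bsg_k^0$ with weight $\bsQ+\kappa_1\bsK$, $\bsQ=R\Lambda_1^{-1}R^{\top}\otimes{\bm I}_p$, and the cross term $W_{3,k}=\bsx_k^{\top}\bsK(\bsv_k+\tfrac{1}{\beta}\bsg_k^0)$ must be included. These choices make the $\eta\beta\,\bsx_k^{\top}\bsK(\cdot)$ and $\eta\alpha\,\bsx_k^{\top}\bsL(\cdot)$ cross terms generated by $W_{1}$, $W_{2}$, and $W_{3}$ cancel identically. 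With independent weights $c_1,c_2$ and no cross term, those terms can only be absorbed by Young's inequality at a cost proportional to $\beta=\kappa_2\sqrt{pT}/\sqrt{n}$, which destroys the $\mathcal{O}(\sqrt{p}/\sqrt{nT})$ rate. You would also need the paper's lower bound $W_k\ge\min\{\tfrac{1}{2\rho(L)},\tfrac{\kappa_1-1}{2\kappa_1}\}\hat{V}_k\ge0$ (which is where the hypothesis $\kappa_1>\tfrac{1}{\rho_2(L)}+1$ is actually consumed) before the telescoped sum yields the three claimed bounds.
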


In order to prove Theorem~\ref{zo_pb:thm-sg-smT}, we introduce the following lemmas.

\begin{lemma}(Lemma 2 in~\cite{zhang2021convergence})\label{zo_pb:lemma:variance}
Consider $f(x) = \mathbb{E}_{\xi} [F(x, \xi)]$, we have the following relationship,
\begin{align}
&\mathbb{E} \Big[ \| g^e_{i} \|  ^2\Big] \nonumber \\
&\leq 2(p-1)\left\| \nabla f (x) \right\|^2 + 2p \sigma^2_1 + \frac{3p^{2}}{n_{c}} \left( \zeta^2 + \frac{L_{f}^2 \delta_{k}^2}{2} \right) \nonumber \\
&\quad+ \frac{p^{2}L_{f}^2 \delta_{k}^2}{2} \label{zo_pb:eq_bd_var_CGE}
\end{align}
\end{lemma}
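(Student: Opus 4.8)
The plan is to prove the bound by isolating the three sources of error in $g^e_i$: the random coordinate subset $\mathcal{S}$, the stochastic sample $\xi$, and the deterministic finite-difference bias controlled by $\delta_k$. Throughout I write $u_j=\frac{F(x+\delta_k e_j,\xi)-F(x-\delta_k e_j,\xi)}{2\delta_k}$ for the scalar two-point difference along coordinate $j$, so that $g^e_i=\frac{p}{n_c}\sum_{j\in\mathcal{S}}u_j e_j$ (this is the symmetric estimator~\eqref{zo_pb:gradient:model2-st2}; the one-sided estimator~\eqref{zo_pb:gradient:model2-st} is handled identically with a slightly different bias constant). Since the coordinate vectors $\{e_j\}$ are orthonormal, the cross terms vanish and $\|g^e_i\|^2=\frac{p^2}{n_c^2}\sum_{j\in\mathcal{S}}u_j^2$, which reduces the whole estimate to controlling scalar quantities.

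First I would take the expectation over the coordinate sampling, conditioned on $\xi$. Because each coordinate enters $\mathcal{S}$ with marginal probability $n_c/p$, the indicator of the event $\{j\in\mathcal{S}\}$ has conditional expectation $n_c/p$, so $\mathbb{E}_{\mathcal{S}}[\|g^e_i\|^2\mid\xi]=\frac{p^2}{n_c^2}\sum_{j=1}^p\frac{n_c}{p}u_j^2=\frac{p}{n_c}\sum_{j=1}^p u_j^2$. This is exactly where the $p/n_c$ inflation factor, and hence the $p^2/n_c$ scaling in the statement, originates; it remains to bound $\sum_{j=1}^p\mathbb{E}_\xi[u_j^2]$ after reinserting this factor.

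Next, for each coordinate I would invoke Assumption~\ref{zo_pb:ass:zeroth-smooth} (smoothness of $F(\cdot,\xi)$ with constant $L_f$) via a Taylor expansion to write $u_j=(\nabla_x F(x,\xi))_j+b_j$, where the finite-difference bias satisfies $|b_j|\le\frac{L_f\delta_k}{2}$ deterministically. I then split the stochastic part as $(\nabla_x F(x,\xi))_j=(\nabla f(x))_j+w_j$ with $\mathbb{E}_\xi[w_j]=0$ and $\mathbb{E}_\xi[w_j^2]\le\zeta^2$ by Assumption~\ref{zo_pb:ass:zeroth-variance}. Applying the elementary inequality $(a+b+c)^2\le 3(a^2+b^2+c^2)$ coordinatewise, summing over $j$, and using $\sum_j(\nabla f(x))_j^2=\|\nabla f(x)\|^2$, $\sum_j\mathbb{E}_\xi[w_j^2]\le p\zeta^2=\sigma_1^2$, and $\sum_j b_j^2\le\frac{pL_f^2\delta_k^2}{4}$ assembles a $\|\nabla f(x)\|^2$ term, a variance term in $\sigma_1^2$ and $\zeta^2$, and the $\delta_k^2$ bias terms, matching the structure of~\eqref{zo_pb:eq_bd_var_CGE}.

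The hard part will be the bookkeeping of the cross terms: since both the bias $b_j$ and the noise $w_j$ depend on $\xi$, they cannot be treated as independent, so the products $\mathbb{E}_\xi[w_j b_j]$ and $\mathbb{E}_\xi[(\nabla f(x))_j b_j]$ must be absorbed using Cauchy--Schwarz together with the deterministic bound on $b_j$. It is precisely this repeated splitting, rather than a single tight variance identity, that is responsible for the somewhat loose constants $2(p-1)$, $2p$, and $3p^2/n_c$ appearing in the statement (in particular, the coordinatewise-to-full-norm relaxation $(\nabla_x F(x,\xi))_j^2\le\|\nabla_x F(x,\xi)\|^2$ is what inflates the $\|\nabla f(x)\|^2$ coefficient to order $p$). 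A final routine point to verify is the interchange of the two expectations, which is justified by the independence of the coordinate sampling from $\xi$ and by Assumption~\ref{zo_pb:ass:zeroth-smooth} holding for almost all $\xi$.
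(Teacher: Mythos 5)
First, a point of comparison: the paper itself contains no proof of this lemma --- it is imported verbatim as Lemma~2 of \cite{zhang2021convergence} --- so your derivation stands or falls on its own merits. Its skeleton is sound and is surely built from the same ingredients as the cited proof: the orthonormal reduction $\|g^e_i\|^2=\frac{p^2}{n_c^2}\sum_{j\in\mathcal{S}}u_j^2$, the conditional expectation over the uniform coordinate sampling producing $\mathbb{E}_{\mathcal{S}}[\|g^e_i\|^2\mid\xi]=\frac{p}{n_c}\sum_{j=1}^{p}u_j^2$, and the smoothness-based bias bound $|b_j|\le\frac{L_f\delta_k}{2}$ (which in fact holds with the \emph{same} constant for the one-sided estimator~\eqref{zo_pb:gradient:model2-st}, not a different one) are all correct.

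The genuine gap is that the argument you actually carry out does not prove the inequality as stated. Chasing your own constants, the coordinatewise split $(a+b+c)^2\le 3(a^2+b^2+c^2)$ yields
\begin{align*}
\mathbb{E}\big[\|g^e_i\|^2\big]\le\frac{3p}{n_c}\|\nabla f(x)\|^2+\frac{3p^2}{n_c}\zeta^2+\frac{3p^2L_f^2\delta_k^2}{4n_c},
\end{align*}
whose gradient coefficient $3p/n_c$ is \emph{not} dominated by the stated $2(p-1)$ when $n_c$ is small (for $n_c=1$ one has $3p>2(p-1)$ for every $p$, and similarly for $n_c=2$, $p\le 3$); since $\|\nabla f(x)\|$ is unbounded, the slack in the remaining constant terms cannot absorb the deficit, so you have proved a structurally similar bound rather than~\eqref{zo_pb:eq_bd_var_CGE}. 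The terms $2p\sigma_1^2$ and $\frac{p^2L_f^2\delta_k^2}{2}$ are likewise never produced; ``matching the structure'' is asserted, not derived. The stated $n_c$-free, order-$p$ terms come precisely from the relaxation you mention only parenthetically --- $(\nabla_xF(x,\xi))_j^2\le\|\nabla_xF(x,\xi)\|^2$ followed by $\|\nabla_xF(x,\xi)\|^2\le 2\|\nabla f(x)\|^2+2\|\nabla_xF(x,\xi)-\nabla f(x)\|^2$ and Assumption~\ref{zo_pb:ass:zeroth-variance} --- so to reproduce the lemma you must commit to that route (or else redo the bookkeeping in Lemma~\ref{zo_pb:lemma:sg2-T}, where the constants of~\eqref{zo_pb:eq_bd_var_CGE} are consumed verbatim in $c_1$ and $c_3$). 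Finally, your closing paragraph is internally inconsistent: once you invoke the factor-$3$ inequality there are no cross terms $\mathbb{E}_\xi[w_jb_j]$ or $\mathbb{E}_\xi[(\nabla f(x))_jb_j]$ left to bookkeep, whereas if you intend the sharper cross-term treatment (needed exactly to push the gradient coefficient down for small $n_c$, e.g.\ via Young's inequality with a free parameter), you must actually carry it out rather than flag it as the ``hard part.''
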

where $\delta_{k} = \max\{ \delta_{i} \}, i \in [p]$.

\begin{lemma}\label{zo_pb:lemma:pb}
By using the powerball term in~\eqref{zo_pb:pb:def} and when $\gamma \in [\frac{1}{2}, 1)$, we have $\Big\Vert\sigma(g^e_{i}, \gamma)\Big\Vert^2 \leq \Big\Vert g^e_{i} \Big\Vert^{2}_{1+\gamma}$.
\end{lemma}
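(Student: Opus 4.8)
The plan is to reduce the asserted vector inequality to an elementary inequality for nonnegative scalars, exploiting that the powerball map in~\eqref{zo_pb:pb:def} is applied coordinate-wise. First I would write everything in coordinates: setting $a_j := |(g^e_{i})_j|\ge 0$ for $j\in[p]$ and using $|\,\sgn(t)\,|t|^{\gamma}|^2=|t|^{2\gamma}$, the left-hand side is
\begin{equation*}
\big\|\sigma(g^e_{i},\gamma)\big\|^2=\sum_{j=1}^{p}a_j^{2\gamma},
\end{equation*}
while the right-hand side is
\begin{equation*}
\big\|g^e_{i}\big\|_{1+\gamma}^2=\Big(\sum_{j=1}^{p}a_j^{1+\gamma}\Big)^{\frac{2}{1+\gamma}}.
\end{equation*}
Thus the claim is equivalent to $\sum_{j}a_j^{2\gamma}\le\big(\sum_{j}a_j^{1+\gamma}\big)^{2/(1+\gamma)}$ for all $a_j\ge 0$ and every $\gamma\in[\frac12,1)$, i.e. a comparison between the $\ell^{2\gamma}$- and $\ell^{1+\gamma}$-quantities of the estimator.

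Next, I would attack this scalar inequality through the power-mean (Jensen) inequality rather than raw norm monotonicity. The natural substitution $b_j:=a_j^{1+\gamma}\ge 0$ together with $\theta:=\frac{2\gamma}{1+\gamma}$ turns the target into $\sum_j b_j^{\theta}\le\big(\sum_j b_j\big)^{2/(1+\gamma)}$, where for $\gamma\in[\frac12,1)$ one has $\theta\in[\frac23,1)$ and the outer exponent $\frac{2}{1+\gamma}\in(1,\frac43]$. Since $\theta<1$, concavity of $t\mapsto t^{\theta}$ lets me bound $\sum_j b_j^{\theta}$ in terms of $(\sum_j b_j)^{\theta}$ (up to a cardinality factor via H\"older), after which I would compare the exponents $\theta$ and $\frac{2}{1+\gamma}$ to close the estimate. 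I would also record the endpoint $\gamma=1$, where $\sigma(x,1)=x$ and both sides collapse to $\|g^e_{i}\|_2^2$, as a consistency check.

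The hard part will be reconciling the two exponents. Ordinary $\ell^p$-norm monotonicity, $\|g^e_{i}\|_{1+\gamma}\le\|g^e_{i}\|_{2\gamma}$ (valid because $2\gamma\le 1+\gamma$), points the wrong way for an upper bound on the left-hand side, so no one-line argument succeeds; the power mean produces the sub-unit exponent $\theta<1$, whereas the target carries the super-unit exponent $\frac{2}{1+\gamma}>1$, and bridging this gap is exactly where the inequality is delicate. I expect it to hinge on the operating regime of the estimator, specifically on a lower control of $\|g^e_{i}\|_{1+\gamma}$ (equivalently $\sum_j b_j$), so I would check whether the second-moment bound of Lemma~\ref{zo_pb:lemma:variance} must be invoked, or an extra normalization imposed, to make the passage from exponent $\theta$ to exponent $\frac{2}{1+\gamma}$ rigorous; verifying the boundary case $\gamma=\frac12$ explicitly would be my first sanity test.
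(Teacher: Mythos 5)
Your reduction to the scalar inequality $\sum_{j} a_j^{2\gamma} \le \bigl(\sum_j a_j^{1+\gamma}\bigr)^{2/(1+\gamma)}$ is correct, but the proposal stops exactly where the proof would have to happen: the final paragraph is a list of things you \emph{would} check, not an argument, and the passage from the concavity bound (exponent $\theta<1$) to the target (exponent $2/(1+\gamma)>1$) is never carried out. In fact it cannot be carried out unconditionally: take $p=1$ and $a_1=t\in(0,1)$; then the left-hand side is $t^{2\gamma}$ and the right-hand side is $t^{2}$, and $t^{2\gamma}>t^{2}$ for every $\gamma<1$. So the inequality as stated fails for small gradients, and your suspicion that it ``hinges on the operating regime of the estimator'' is exactly right. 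One needs something like $|(g^e_i)_j|\ge 1$ for all $j$ (whence $a_j^{2\gamma}\le a_j^{1+\gamma}$ because $2\gamma\le 1+\gamma$, and $\sum_j a_j^{1+\gamma}\ge 1$ then gives the claim since $2/(1+\gamma)\ge 1$), or a lower bound of the form $\|g^e_i\|_{1+\gamma}\ge p^{1/(2(1+\gamma))}$ to absorb the cardinality factor coming from H\"older. No amount of power-mean manipulation alone will bridge the two exponents.

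For comparison, the paper does not actually prove the lemma either; its ``proof'' is a one-line citation to Lemma~1 of \cite{zhang2021accelerated}, so there is no argument in this paper against which to check the missing step. Your diagnosis is therefore the more informative document, since it identifies precisely the hypothesis that the statement silently requires. To turn the proposal into a proof you must either import that hypothesis explicitly and verify it along the iterates where Lemma~\ref{zo_pb:lemma:pb} is invoked (e.g.\ inside the derivation of~\eqref{zo_pb:zerosg:rand-grad-esti2}), or weaken the conclusion to $\|\sigma(g^e_i,\gamma)\|^2\le p^{(1-\gamma)/(1+\gamma)}\,\|g^e_i\|_{1+\gamma}^{2\gamma}$, which is unconditionally true by H\"older's inequality with exponents $\tfrac{1+\gamma}{2\gamma}$ and $\tfrac{1+\gamma}{1-\gamma}$.
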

\begin{proof}
The proof follows the proof of Lemma 1 in~\cite{zhang2021accelerated} directly.
\end{proof}

\begin{lemma}\label{zo_pb:zerosg:lemma:grad-st}
Suppose Assumptions~~\ref{zo_pb:ass:zeroth-smooth}--~\ref{zo_pb:ass:fig} hold. Let $\{\bsx_k\}$ be the sequence generated by Algorithm~\ref{zo_pb:algorithm-random-pd}, $\bsg^e_k=\col(g^e_{1,k},\dots,g^e_{n,k})$, $\bsg^0_k=n\nabla{f}(\bar{\bsx}_k)$, $\bar{\bsg}_k^0=\bsH\bsg^0_{k}={\bm 1}_n\otimes\nabla f(\bar{x}_k)$, then
\begin{subequations}
\begin{align}
\mathbb{E}\Big[\|\bsg^e_k\|_{1+\gamma}^2\Big]
&\le  6(p-1)\|\bar{\bsg}_{k}^0\|_{1+\gamma}^2+6(p-1)L_f^2\|\bsx_{k}\|^2_{\bsK}\nonumber\\
&\quad+6n(p-1)\sigma^2_2 + \frac{3np^{2}}{n_{c}} \left( \zeta^2 + \frac{L_{f}^2 \delta_{k}^2}{2} \right)\nonumber \\
&\quad+2np \sigma^2_1+ \frac{np^{2}L_{f}^2 \delta_{k}^2}{2}\label{zo_pb:zerosg:rand-grad-esti2}\\
\|\bsg^0_{k+1}\|^2&\le 3(\eta^2L_f^2\|\bsg^e_{k}\|^2+n\sigma^2_2
+\|\bar{\bsg}_{k}^0\|^2).\label{zo_pb:zerosg:rand-grad-esti4}
\end{align}
\end{subequations}
\end{lemma}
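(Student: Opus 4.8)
The plan is to establish the two bounds separately, both resting on $L_f$-smoothness (Assumption~\ref{zo_pb:ass:zeroth-smooth}), the gradient-similarity bound (Assumption~\ref{zo_pb:ass:fig}), and the per-agent variance estimate of Lemma~\ref{zo_pb:lemma:variance}, combined with the crude splitting inequality $\|a+b+c\|^2 \le 3\|a\|^2 + 3\|b\|^2 + 3\|c\|^2$, which is where the universal factor $3$ (and hence the coefficient $6(p-1)$) originates.

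For \eqref{zo_pb:zerosg:rand-grad-esti2}, I would first apply Lemma~\ref{zo_pb:lemma:variance} to each agent with $f=f_i$ and sampling point $x=x_{i,k}$, then sum over $i\in[n]$; since $\|\bsg^e_k\|^{1+\gamma}_{1+\gamma}=\sum_i\|g^e_{i,k}\|^{1+\gamma}_{1+\gamma}$, this reduces the stacked quantity to a sum of per-agent terms. The remaining work is to control $\sum_i\|\nabla f_i(x_{i,k})\|^2$. I would decompose $\nabla f_i(x_{i,k}) = \big(\nabla f_i(x_{i,k})-\nabla f_i(\bar{x}_k)\big) + \big(\nabla f_i(\bar{x}_k)-\nabla f(\bar{x}_k)\big) + \nabla f(\bar{x}_k)$ and apply the triple split: the first difference is bounded by $L_f^2\|x_{i,k}-\bar{x}_k\|^2$ via smoothness, the second by $\sigma_2^2$ via Assumption~\ref{zo_pb:ass:fig}, and the third contributes $\|\nabla f(\bar{x}_k)\|^2$. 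Summing over $i$, I would identify $\sum_i\|x_{i,k}-\bar{x}_k\|^2=\|\bsx_k\|^2_{\bsK}$ and $n\|\nabla f(\bar{x}_k)\|^2=\|\bar{\bsg}^0_k\|^2$, which match the consensus-error and averaged-gradient terms on the right-hand side, while the $2np\sigma_1^2$, $\tfrac{3np^2}{n_c}(\cdot)$ and $\tfrac{np^2 L_f^2\delta_k^2}{2}$ terms arise directly from summing the additive noise contributions of Lemma~\ref{zo_pb:lemma:variance}.

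For \eqref{zo_pb:zerosg:rand-grad-esti4}, the crux is the mean dynamics. Averaging the primal update \eqref{zo_pb:alg:random-pd-x} over $i$ and using ${\bm 1}_n^\top L=0$ annihilates the Laplacian consensus term, while summing the dual update \eqref{zo_pb:alg:random-pd-q} shows that $\sum_i v_{i,k}\equiv\sum_i v_{i,0}={\bm 0}_p$ is invariant, so the dual term also drops out; this leaves $\bar{x}_{k+1}-\bar{x}_k = -\tfrac{\eta}{n}\sum_i\sigma(g^e_{i,k},\gamma)$. I would then expand $\|\bsg^0_{k+1}\|^2=\sum_i\|\nabla f_i(\bar{x}_{k+1})\|^2$ with the same triple split around $\nabla f_i(\bar{x}_k)$ and $\nabla f(\bar{x}_k)$, bounding the increment term by $3L_f^2\, n\|\bar{x}_{k+1}-\bar{x}_k\|^2$, the similarity term by $3n\sigma_2^2$, and the last by $3\|\bar{\bsg}^0_k\|^2$. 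A Cauchy--Schwarz step gives $n\|\bar{x}_{k+1}-\bar{x}_k\|^2 \le \eta^2\sum_i\|\sigma(g^e_{i,k},\gamma)\|^2$, and Lemma~\ref{zo_pb:lemma:pb} converts the powerball term into $\|g^e_{i,k}\|_{1+\gamma}^2$, which after norm reconciliation yields the $\eta^2 L_f^2\|\bsg^e_k\|^2$ contribution.

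I expect the main obstacle to be the careful reconciliation of norms: Lemma~\ref{zo_pb:lemma:variance} is stated in the Euclidean norm, whereas \eqref{zo_pb:zerosg:rand-grad-esti2} is expressed in the $\|\cdot\|_{1+\gamma}$ norm, and the powerball map $\sigma(\cdot,\gamma)$ links the two only through Lemma~\ref{zo_pb:lemma:pb}. I would handle this by carrying the $\|\cdot\|_{1+\gamma}$ norm through the agentwise decomposition and invoking the monotonicity and power-mean relations between $\|\cdot\|_{1+\gamma}$ and $\|\cdot\|_2$ only where the additive noise terms force a conversion, then checking that the resulting exponents of $p$ and $n$ collapse to the stated coefficients rather than leaving stray dimensional factors. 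Verifying that these conversions are tight enough to preserve the clean form of both bounds is the step that needs the most attention.
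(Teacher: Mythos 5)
Your proposal is correct and follows essentially the same route as the paper, whose own proof of this lemma is only a two-line citation of Lemma~\ref{zo_pb:lemma:variance}, Lemma~\ref{zo_pb:lemma:pb}, the Cauchy--Schwarz (triple-split) inequality, and Assumptions~\ref{zo_pb:ass:zeroth-smooth} and~\ref{zo_pb:ass:fig} --- exactly the ingredients you assemble, with the same agentwise decomposition around $\nabla f_i(\bar{x}_k)$ and $\nabla f(\bar{x}_k)$ and the same mean-dynamics argument for the second bound. The $\ell_2$-versus-$\ell_{1+\gamma}$ reconciliation you flag is indeed the one point the paper glosses over entirely, so your explicit attention to it only strengthens the argument.
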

\begin{proof}
(i) Eq.~\eqref{zo_pb:zerosg:rand-grad-esti2} is due to Lemma~\ref{zo_pb:lemma:variance}, Lemma~\ref{zo_pb:lemma:pb}, Cauchy-Schwarz inequality and Assumption~\ref{zo_pb:ass:fig}.

(ii) Eq.~\eqref{zo_pb:zerosg:rand-grad-esti4} is established by Cauchy-Schwarz inequality, Assumption~\ref{zo_pb:ass:zeroth-smooth} and~\ref{zo_pb:ass:fig}.
\end{proof}

\begin{lemma}\label{zo_pb:lemma:sg2-T}
Suppose Assumptions~\ref{zo_pb:ass:graph}--\ref{zo_pb:ass:fig} hold, and we have fixed parameters $\alpha=\kappa_1\beta$, $\beta$, and $\eta=\frac{\kappa_2}{\beta}$, where $\beta$ is large enough,
$\kappa_1>\frac{1}{\rho_2(L)}+1$ and $\kappa_2\in\Big(0,\min\{\frac{(\kappa_1-1)\rho_2(L)-1}{\rho(L)+(2\kappa_1^2+1)\rho(L^2)+1}, \frac{1}{5}\}\Big)$ are constants. Let $\{\bsx_k\}$ be the sequence generated by Algorithm~\ref{zo_pb:algorithm-random-pd}, then
\begin{subequations}
\begin{align}
\mathbb{E}[W_{k+1}]
&\le   W_{k}-\kappa_4\|\bsx_k\|^2_{\bsK}\nonumber\\
&\quad-\frac{1}{2}(\kappa_2 - 5\kappa_2^2)\Big\|\bm{v}_k+\frac{1}{\beta}\bsg_{k}^0\Big\|^2_{\bsK}\nonumber\\
&\quad-\frac{1}{8}\eta\|\bar{\bsg}^0_{k}\|_{1+\gamma}^2+\mathcal{O}(np)\eta^2 + \mathcal{O}(np^2)\eta\delta_k^2,
\label{zo_pb:zerosg:sgproof-vkLya2T}\\
\mathbb{E}[W_{4,k+1}]
&\le  W_{4,k}+2\eta L_f^2\|\bsx_k\|^2_{\bsK}-\frac{1}{8}\eta\|\bar{\bsg}_{k}^0\|^2\nonumber\\
&\quad+\mathcal{O}(p)\eta^2
+\mathcal{O}(np)\eta\delta^2_k.\label{zo_pb:zerosg:v4kspeed}
\end{align}
\end{subequations}
\end{lemma}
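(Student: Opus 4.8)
The plan is to derive both estimates as one-step Lyapunov descent inequalities. I would first rewrite the iteration~\eqref{zo_pb:alg:random-pd} in the stacked form $\bsx_{k+1}=\bsx_k-\eta(\alpha\bsL\bsx_k+\beta\bm{v}_k+\sigma(\bsg^e_k,\gamma))$ and $\bm{v}_{k+1}=\bm{v}_k+\eta\beta\bsL\bsx_k$, then compute $\mathbb{E}_k[W_{k+1}]$ and $\mathbb{E}_k[W_{4,k+1}]$ conditioned on the history, using that the coordinate sampling and the stochastic oracle make $\bsg^e_k$ an (essentially) unbiased estimator so that its second moments are governed by Lemma~\ref{zo_pb:zerosg:lemma:grad-st}. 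Two structural facts drive the averaging: $\bsH\bsL=\bm{0}$ (since $\bm{1}_n^\top L=\bm{0}$) and $\bsH\bm{v}_k=\bm{0}$ for all $k$ (from $\sum_j v_{j,0}=\bm{0}_p$), so averaging the primal update annihilates the Laplacian and dual parts and leaves $\bar{\bsx}_{k+1}=\bar{\bsx}_k-\eta\bsH\sigma(\bsg^e_k,\gamma)$.

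For~\eqref{zo_pb:zerosg:sgproof-vkLya2T}, I would expand the $\bsK$-weighted quadratics carried by the composite primal--dual function $W_k$ (the consensus error $\|\bsx_k\|^2_{\bsK}$ and the dual residual $\|\bm{v}_k+\frac{1}{\beta}\bsg^0_k\|^2_{\bsK}$). The deterministic Laplacian drift is sandwiched between $\rho_2(L)$ and $\rho(L),\rho(L^2)$ using the decomposition $R\Lambda_1^{-1}R^\top L=K_n$ recalled in the Graph Theory subsection; this is exactly where the hypotheses enter, as $\kappa_1>\frac{1}{\rho_2(L)}+1$ forces the coefficient of $\|\bsx_k\|^2_{\bsK}$ to equal a strictly negative constant $-\kappa_4$, while the upper bound on $\kappa_2$ (with denominator built from $\rho(L)$ and $\rho(L^2)$, together with the separate requirement $\kappa_2<\frac{1}{5}$) guarantees both $\kappa_4>0$ and $\kappa_2-5\kappa_2^2>0$. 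The objective-value component of $W_k$, handled through $L_f$-smoothness and Young's inequality, produces the negative term $-\frac{1}{8}\eta\|\bar{\bsg}^0_k\|_{1+\gamma}^2$, where Lemma~\ref{zo_pb:lemma:pb} converts the powerball map $\sigma(\cdot,\gamma)$ into the $1+\gamma$ norm; the residual second moment $\mathbb{E}_k[\|\bsg^e_k\|^2_{1+\gamma}]$ is then replaced by the bound of Lemma~\ref{zo_pb:zerosg:lemma:grad-st}, and substituting $\eta=\kappa_2/\beta$ collapses the variance and smoothing pieces into $\mathcal{O}(np)\eta^2$ and $\mathcal{O}(np^2)\eta\delta_k^2$.

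For~\eqref{zo_pb:zerosg:v4kspeed}, I would take $W_{4,k}$ to track the centralized objective along $\bar{x}_k$ and apply $L_f$-smoothness to $\bar{\bsx}_{k+1}=\bar{\bsx}_k-\eta\bsH\sigma(\bsg^e_k,\gamma)$. After conditional expectation, the first-order inner product supplies the descent term $-\frac{1}{8}\eta\|\bar{\bsg}^0_k\|^2$; the discrepancy between the local gradients sampled along $\bsx_k$ and the gradient at the consensus point $\bar{x}_k$ is bounded through smoothness and Assumption~\ref{zo_pb:ass:fig}, yielding $2\eta L_f^2\|\bsx_k\|^2_{\bsK}$. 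The quadratic Taylor remainder is controlled by Lemma~\ref{zo_pb:lemma:pb} and Lemma~\ref{zo_pb:zerosg:lemma:grad-st}; here Jensen's inequality applied to the averaged step $\bar{\bsx}_{k+1}-\bar{\bsx}_k$ trims a factor of $n$ off the stacked second-moment bound, which is why the remainders are the lighter $\mathcal{O}(p)\eta^2$ and $\mathcal{O}(np)\eta\delta_k^2$.

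The main obstacle is the bias and nonlinearity of the powerball map: because $\mathbb{E}_k[\sigma(g^e_{i,k},\gamma)]$ is not proportional to $\nabla f(\bar{x}_k)$, the descent inner products cannot be read off directly, and one must show that a strictly negative multiple of $\|\bar{\bsg}^0_k\|^2$ (respectively $\|\bar{\bsg}^0_k\|^2_{1+\gamma}$) survives the transformation. This is precisely where Lemma~\ref{zo_pb:lemma:pb} and the restriction $\gamma\in[\frac{1}{2},1)$ are indispensable, and the delicate balancing act is to keep the zeroth-order smoothing bias (the $\delta_k^2$ terms) and the coordinate-sampling variance (the $1/n_c$ terms) from swamping that descent under the single step-size coupling $\eta=\kappa_2/\beta$ with $\beta=\Theta(\sqrt{pT/n})$.
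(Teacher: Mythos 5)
Your plan follows the paper's own proof essentially step for step: the same four-part Lyapunov function $W_k=W_{1,k}+W_{2,k}+W_{3,k}+W_{4,k}$ expanded one iteration at a time, the same spectral sandwich via $R\Lambda_1^{-1}R^{\top}$ to extract $\kappa_4$ and $\kappa_2-5\kappa_2^2$, the same use of Lemmas~\ref{zo_pb:lemma:pb} and~\ref{zo_pb:zerosg:lemma:grad-st} to absorb the second moments into $\mathcal{O}(np)\eta^2+\mathcal{O}(np^2)\eta\delta_k^2$, and the same smoothness-plus-averaging argument on $\bar{x}_{k+1}$ for~\eqref{zo_pb:zerosg:v4kspeed}. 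The one place you are actually more careful than the paper is the ``main obstacle'' you flag---that $\mathbb{E}_k[\sigma(\bsg^e_k,\gamma)]$ is not $\bsg^s_k$---which the paper's steps (b) and (m)--(n) quietly elide by writing the descent inner products with $\bsg^e_k$ rather than with its powerball image.
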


\begin{proof}
We provide the proof of Lemma~\ref{zo_pb:lemma:sg2-T} in the appendix.
\end{proof}

We are now ready to prove Theorem~\ref{zo_pb:algorithm-random-pd}.
\begin{proof}

Denote
\begin{align*}
\hat{V}_k=\|\bm{x}_k\|^2_{\bsK}+\Big\|\bsv_k
+\frac{1}{\beta_k}\bsg_k^0\Big\|^2_{\bsK}+n(f(\bar{x}_k)-f^*).
\end{align*}
We have
\begin{align}
&W_{k}\nonumber\\
&=\frac{1}{2}\|\bsx_{k}\|^2_{\bsK}
+\frac{1}{2}\Big\|\bsv_k+\frac{1}{\beta_k}\bsg_k^0\Big\|^2_{\bsQ+\kappa_1\bsK}\nonumber\\
&~~~+\bsx_k^\top\bsK\Big(\bm{v}_k+\frac{1}{\beta_k}\bsg_k^0\Big)+n(f(\bar{x}_k)-f^*)\nonumber\\
&\ge\frac{1}{2}\|\bsx_{k}\|^2_{\bsK}
+\frac{1}{2}\Big(\frac{1}{\rho(L)}+\kappa_1\Big)
\Big\|\bsv_k+\frac{1}{\beta_k}\bsg_k^0\Big\|^2_{\bsK}\nonumber\\
&~~~-\frac{1}{2\kappa_1}\|\bsx_{k}\|^2_{\bsK}
-\frac{1}{2}\kappa_1\Big\|\bsv_k+\frac{1}{\beta_k}\bsg_k^0\Big\|^2_{\bsK}
+n(f(\bar{x}_k)-f^*)\nonumber\\
&\ge\min\Big\{\frac{1}{2\rho(L)},~\frac{\kappa_1-1}{2\kappa_1}\Big\}\hat{V}_k\ge0,\label{zo_pb:zerosg:vkLya3}
\end{align}
Additionally, we can get $W_{k}\le(\frac{\kappa_1+1}{2}+\frac{1}{2\rho_2(L)})\hat{V}_k$.
	
Consider that $\beta=\kappa_2\sqrt{pT}/\sqrt{n}$ and $T> n^3/p$, we know that Lemma~\ref{zo_pb:lemma:sg2-T} are satisfied. So \eqref{zo_pb:zerosg:sgproof-vkLya2T} and \eqref{zo_pb:zerosg:v4kspeed} hold.
Summing~\eqref{zo_pb:zerosg:sgproof-vkLya2T} over $k \in [0, T]$ and applying~\eqref{zo_pb:zerosg:vkLya3}, we have
\begin{align}
&\frac{1}{T+1}\sum_{k=0}^{T}\mathbb{E}[\frac{1}{n}\sum_{i=1}^{n}\|x_{i,k}-\bar{x}_k\|^2]\nonumber\\
&\le\frac{1}{\kappa_4}\Big(\frac{W_{0}}{n(T+1)}
+\frac{\mathcal{O}(n)\eta\delta_k^2}{T}
+\frac{\mathcal{O}(n/p)\eta^2\kappa_\delta}{\sqrt{T(T+1)}}\Big) \nonumber\\
&=\mathcal{O}(\frac{n}{T}),
\label{zo_pb:zerosg:thm-sg-sm-equ3.1p}
\end{align}
where $W_{0} = \mathcal{O}(n)$, $\frac{W_{0}}{n(T+1)}=\mathcal{O}(\frac{1}{T})$, $\frac{n\mathcal{O}(p^2)\eta\delta_k^2}{T} = \mathcal{O}(\frac{n}{T})$, and $\frac{\mathcal{O}(n/p)\eta^2\kappa_\delta}{\sqrt{T(T+1)}}=\mathcal{O}(\frac{n}{pT})$ , which gives~\eqref{zo_pb:coro-sg-sm-equ3.1}.

From~\eqref{zo_pb:zerosg:v4kspeed},~\eqref{zo_pb:step:eta2-sm}, and~\eqref{zo_pb:zerosg:vkLya3}, summing~\eqref{zo_pb:zerosg:v4kspeed} over $k \in [0, T]$ similar to the way to get~\eqref{zo_pb:coro-sg-sm-equ3.1}, we have
\begin{align}\label{zo_pb:zerosg:thm-sg-sm-equ3p}
&\frac{1}{T+1}\sum_{k=0}^{T}\mathbb{E}[\|\nabla f(\bar{x}_k)\|_{1+\gamma}^2]=\frac{1}{n(T+1)}\sum_{k=0}^{T}\mathbb{E}[\|\bar{\bsg}_{k}^0\|_{1+\gamma}^2]\nonumber\\
&\le 8\Big(\frac{W_{4,0}}{n(T+1)\eta}
+\frac{2L_f^2}{n(T+1)}\sum_{k=0}^{T}\mathbb{E}[\|\bsx_k\|^2_{\bsK}]+\frac{\mathcal{O}(p)}{n}\nonumber\\
&~~~+\frac{\mathcal{O}(\sqrt{np})}{n\sqrt{T+1}}\Big).
\end{align}
Noting that $\eta=\kappa_2/\beta_k=\sqrt{n}/\sqrt{pT}$, and $n/T<\sqrt{p}/\sqrt{nT}$ due to $T> n^3/p$, from~\eqref{zo_pb:zerosg:thm-sg-sm-equ3p} and~\eqref{zo_pb:zerosg:thm-sg-sm-equ3.1p}, we have
\begin{align*}
\frac{1}{T}\sum_{k=0}^{T-1}\mathbb{E}[\|\nabla f(\bar{x}_k)\|_{1+\gamma}^2]
&=\mathcal{O}(\frac{\sqrt{p}}{\sqrt{nT}})
+\mathcal{O}(\frac{n}{T}),
\end{align*}
which gives~\eqref{zo_pb:coro-sg-sm-equ3}.

Summing~\eqref{zo_pb:zerosg:v4kspeed} over $ k\in[0,T]$, and using~\eqref{zo_pb:step:eta2-sm}  yield
\begin{align}\label{zo_pb:zerosg:thm-sg-sm-equ4p}
&n(\mathbb{E}[f(\bar{x}_{T+1})]-f^*)=\mathbb{E}[W_{4,T+1}]\nonumber\\
&\le W_{4,0}+\frac{2\sqrt{n}}{\sqrt{pT}} L_f^2\sum_{k=0}^{T}\|\bsx_k\|^2_{\bsK}+n\mathcal{O}(p)\eta^2\frac{T+1}{T}\nonumber\\
&~~~+\mathcal{O}(np)\eta\delta^2_k\sqrt{\frac{T+1}{T}}.
\end{align}

Noting that $W_{4,0}=\mathcal{O}(n)$ and $\sqrt{n}n/\sqrt{pT}<1$ due to $T> n^3/p$, from \eqref{zo_pb:zerosg:thm-sg-sm-equ3.1p} and~\eqref{zo_pb:zerosg:thm-sg-sm-equ4p}, we have $\mathbb{E}[f(\bar{x}_{T+1})]-f^*=\mathcal{O}(1)$, which gives~\eqref{zo_pb:coro-sg-sm-equ4}.

\end{proof}

\begin{remark}
Theorem~\ref{zo_pb:thm-sg-smT} indicates that we improve the convergence rate of the algorithm in~\cite{zhang2021convergence} from $\mathcal{O}(\frac{\sqrt{p}}{\sqrt{T}})$ to $\mathcal{O}(\frac{\sqrt{p}}{\sqrt{nT}})$, which is the same convergence rate of the algorithms proposed in~\cite{yi2021zerothorder}.
\end{remark}

\section{Numerical Experiments}\label{zo_pb:sec:exp}

\subsection{Black-box binary classification}\label{zo_pb:sec:exp-bc}

We consider a non-linear least square problem \cite{xu2020second, liu2019signsgd, liu2018zeroth}, i.e., problem with $f_i(\mathbf x) = \left ( y_i - \phi(\mathbf x; \mathbf a_i) \right )^2$ for $i\in[n]$, where $\phi(\mathbf x; \mathbf a_i) = \frac{1}{1 + e^{-\mathbf a_i^{T} \mathbf x}}$. For preparing the synthetic dataset, we randomly draw samples $\mathbf a_i$ from $\mathcal{N}(\mathbf 0, \mathbf{I} )$, and we set a optimal vector $\mathbf{x_{opt}} = \mathbf{1}$, the label is $y_i = 1$ if $\phi(\mathbf x_{opt}; \mathbf a_i) \geq 0.5$ and $0$ otherwise. The training set has $200$ samples per agent and test set has $10,000$ samples. We set the dimension $d$ of $\mathbf a_i$ as $100$, batchsize is $1$, and the total iteration number as $500$. As suggested in the work \cite{liu2019signsgd}, the smooth parameter $\delta = \frac{10}{\sqrt{Td}}$.

We compare the proposed algorithms with ZODIAC only since ZODIAC achieves better result than other state-of-the-art algorithms.
The communication topology of $N = 500$ agents is generated randomly following the Erd\H{o}s - R\' enyi model with probability of $1.01 \log (N)/N$ in Figure~\ref{fig:graph_500}. 
The training loss and testing accuracy are shown in Figure~\ref{fig:loss_500} and Figure~\ref{fig:acc_500} respectively.
We can easily see that the proposed algorithm converges faster than ZODIAC and returns a better result in terms of testing accuracy, shown in Table~\ref{tab:acc}.

\begin{figure}[!ht]
\centering
  \includegraphics[width=0.45\textwidth]{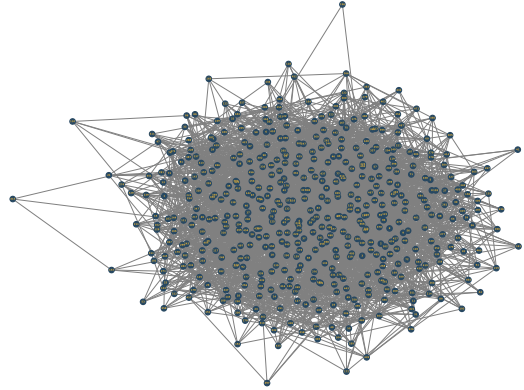}
  \caption{Communication topology of 500 agents.}
  \label{fig:graph_500}
\end{figure}

\begin{figure}[!ht]
\centering
  \includegraphics[width=0.45\textwidth]{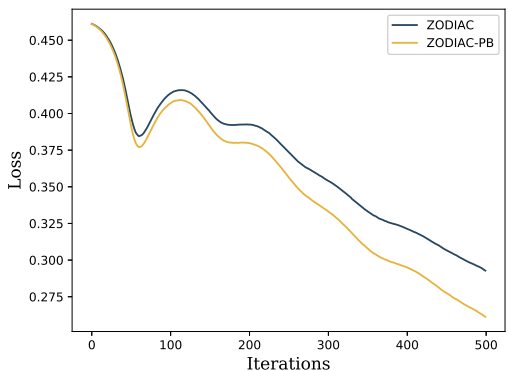}
  \caption{Training Loss.}
  \label{fig:loss_500}
\end{figure}

\begin{figure}[!ht]
\centering
  \includegraphics[width=0.45\textwidth]{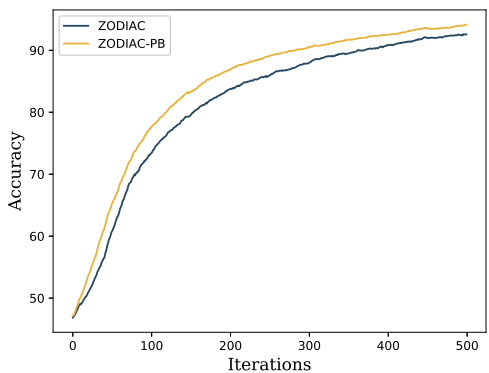}
  \caption{Testing Accuracy.}
  \label{fig:acc_500}
\end{figure}

\begin{table}[!ht]
\caption{Accuracy}
\label{tab:acc}
\begin{center}
\begin{tabular}{cc}
\multicolumn{1}{c}{Algorithm}  & Accuracy($\%$)
\\ \hline
ZODIAC-PB       & \textbf{94.15}  \\
ZODIAC       & 92.56  \\
\hline
\end{tabular}
\end{center}
\end{table}

\subsection{Generation of adversarial examples from black-box DNNs}\label{zo_pb:sec:exp-dnn}

We consider the benchmark example of generation of adversarial examples from black-box DNNs in ZO optimization literature \cite{chen2017zoo, liu2019signsgd, liu2018zeroth}.
In image classification tasks, convolutional neural networks are vulnerable to adversarial examples \cite{goodfellow2014explaining} even under small perturbations, which leads misclassifications.
Considering the setting of zeroth-order attacks \cite{carlini2017towards}, the model is hidden and no gradient information is available. We treat this task of generating adversarial examples as an zeroth-order optimization problem.

Formally, the loss function is given as in~\eqref{zo_pb:exp:loss_gan} 
\begin{equation}\label{zo_pb:exp:loss_gan}
\begin{split}
f_i(\mathbf x) = & c \cdot \max \{ F_{y_i} (0.5 \cdot \tanh ( \tanh^{-1} 2 \mathbf  a_i + \mathbf x)) \\
&-\max_{j \neq y_i} F_j(0.5 \cdot \tanh ( \tanh^{-1} 2 \mathbf a_i + \mathbf x)) , 0 \} \\
& + \|0.5 \cdot \tanh ( \tanh^{-1} 2 \mathbf a_i + \mathbf x) - \mathbf a_i \|_2^2
\end{split}
\end{equation}
where $(\mathbf a_i,y_i)$ denotes the pair of the $i$th natural image $\mathbf a_i$ and its original class label $y_i$. The output of function $F(\mathbf z)=[F_1(\mathbf z),\ldots,F_N(\mathbf z)]$ is the well-trained model prediction of the input $\mathbf z$ in all $N$ image classes.
The well-trained DNN model \footnote{\url{https://github.com/carlini/nn_robust_attacks}} on MNIST handwritten has $99.4\%$ test accuracy on natural examples \cite{liu2018zeroth}.
The purpose of this experiment is to generate \textit{false} examples to attack the DNN model in order to have a wrong prediction, i.e. if feeding an original image with label $1$, the DNN predicts it as $1$, however after generating the \textit{false} example based on the original $1$, the DNN should make a wrong prediction.
We conduct two experiments on 10 agents and 50 agents scenarios.
\subsubsection{10 agents}

We compare the proposed algorithm with several existing algorithms, namely ZODIAC~\cite{zhang2021convergence}, ZODPDA~\cite{yi2021zerothorder}, ZO-GDA  \cite{tang2020distributedzero}, and ZONE-M~\cite{hajinezhad2019zone} on a communication topology with 10 agents following the Erd\H{o}s - R\' enyi model with probability of $0.4$.
The digit we consider to attack is $4$.
Additionally, we compare with centralized ZO algorithms, namely ZO-SCD~\cite{lian2016Comprehensive}, and ZO-SGD ~\cite{ghadimi2013stochastic} as baselines.
The training loss is shown in Figure.~\ref{fig:loss_10_agent} and the distortion of the generated examples is shown in Table~\ref{zo_fb:tab:dist10}.
we can conclude that the ZODIAC-PB outperformed among all the algorithms compared in the literature.

\begin{figure}[!ht]
\centering
  \includegraphics[width=0.45\textwidth]{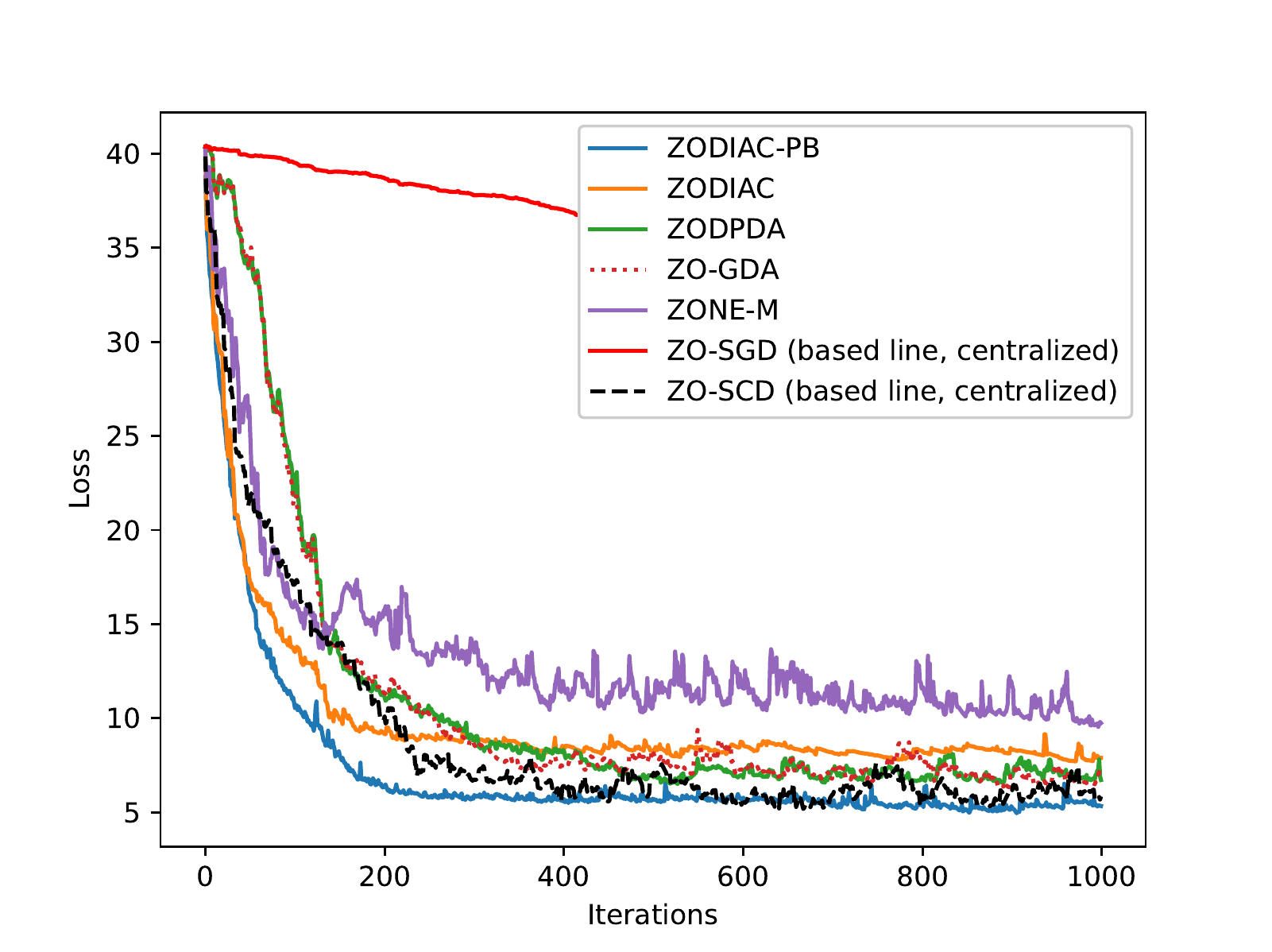}
  \caption{Performance comparison of training loss for 10 agents.}
  \label{fig:loss_10_agent}
\end{figure}

\begin{table}[ht!]
\caption{Distortion (10 agents)}
\label{zo_fb:tab:dist10}
\begin{center}
\begin{tabular}{cc}
\multicolumn{1}{c}{Algorithm}  & $l_2$ Distortion
\\ \hline
ZODIAC-PB   & \textbf{4.92} \\
ZODIAC      & 7.18  \\
ZODPDA       & 6.44  \\
ZO-GDA       & 7.23  \\
ZONE-M       & 9.96  \\
ZO-SGD        & 5.69  \\
ZO-SCD        & 5.14  \\
\hline
\end{tabular}
\end{center}
\end{table}

\subsubsection{50 agents}
In this case, we only compare the proposed algorithms with ZODIAC to attack digit $0$ since since ZODIAC achieves better result than other state-of-the-art algorithms.
We tested them on 50 agents respectively, the topology are shown in Figure.~\ref{fig:graph_50}. The graphs are generated randomely following the Erd\H{o}s - R\' enyi model with probability of $0.4$.

The distortion of the generated examples is shown in Table~\ref{zo_fb:tab:dist50}, and the generated examples and prediction results are shown in Table~\ref{table:digit0}.
In this experiment, we can conclude that the proposed algorithm accelerate the convergence in Figure~\ref{fig:comparison}.
Moreover, the distortion generated from ZODIAC-PB is 5.67, which is around $34.7\%$ improvement.

\begin{figure}[!ht]
\centering
  \includegraphics[width=0.45\textwidth]{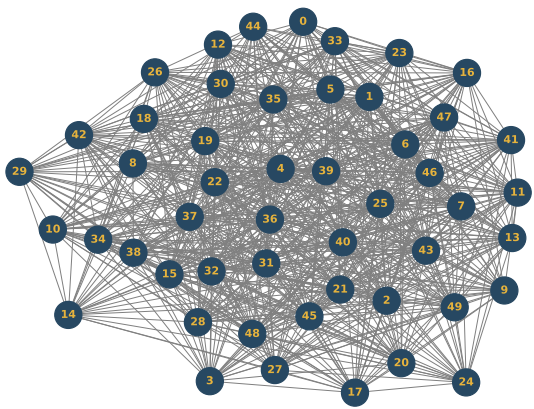}
  \caption{Communication topology of 50 agents.}
  \label{fig:graph_50}
\end{figure}

\begin{figure}[!ht]
\centering
  \includegraphics[width=0.45\textwidth]{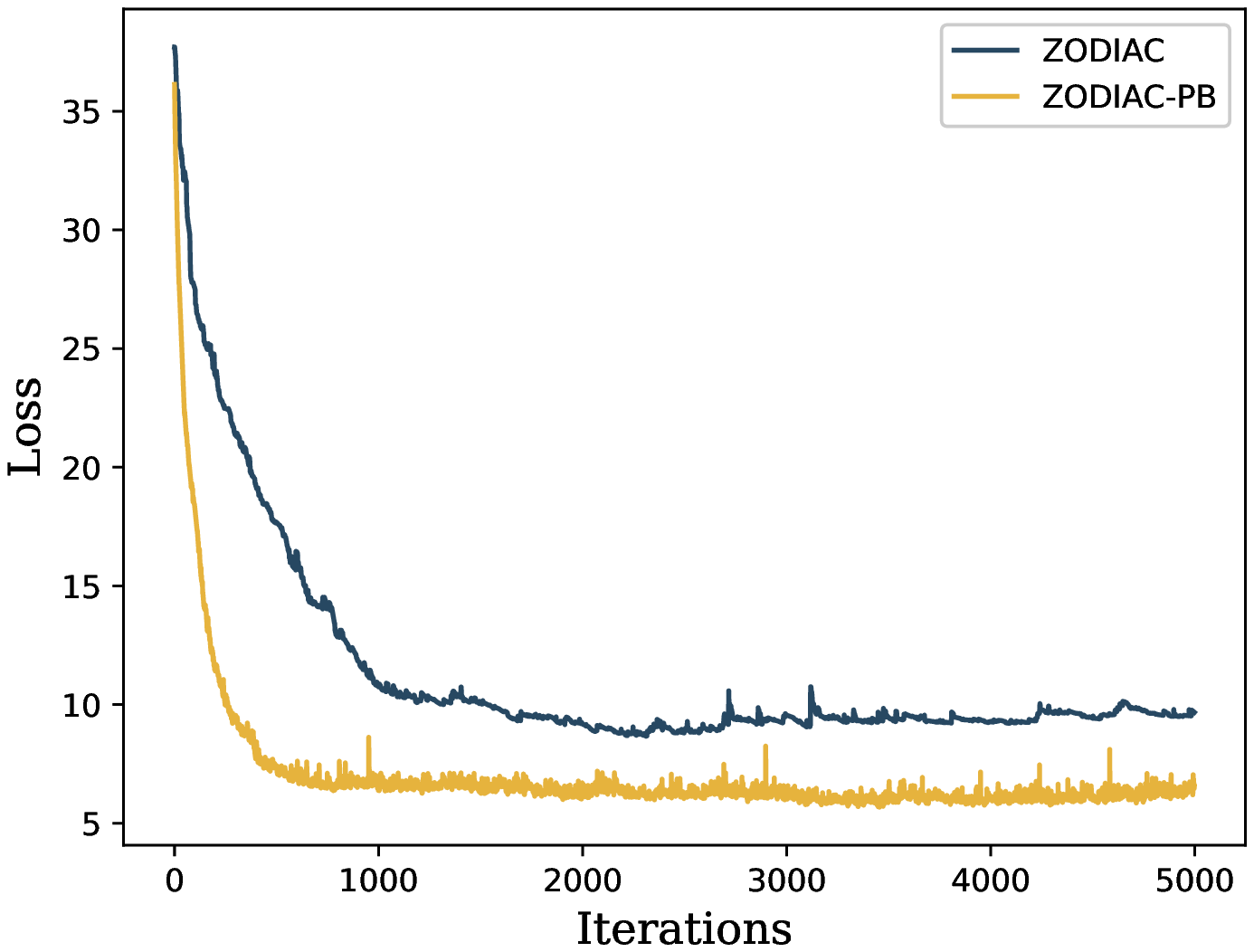}
  \caption{Performance comparison of training loss for 50 agents.}
  \label{fig:comparison}
\end{figure}


\begin{table}[!ht]
\caption{Distortion (50 agents)}
\label{zo_fb:tab:dist50}
\begin{center}
\begin{tabular}{cc}
\multicolumn{1}{c}{Algorithm}  & $l_2$ Distortion
\\ \hline
ZODIAC-PB       & \textbf{5.67}  \\
ZODIAC       & 8.68  \\
\hline
\end{tabular}
\end{center}
\end{table}

\begin{table*}
  [!ht] \caption{Comparison of generated adversarial examples from a black-box DNN on MNIST: digit class ``0''.} \label{table:digit0}
  \begin{center}
  \begin{adjustbox}{max width=0.75\textwidth}
  \begin{tabular}
      {ccccccccccc}
      \hline
      	Image ID & 3 & 10 & 13 & 25 & 28 & 55 & 69 & 71 & 101 & 126 \\
      \hline &&&&&&&&&& \vspace{-0.2cm} \\
      	\red{Original} &
        \parbox[c]{2.2em}{\includegraphics[width=0.4in]{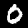}} &
        \parbox[c]{2.2em}{\includegraphics[width=0.4in]{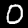}} &
        \parbox[c]{2.2em}{\includegraphics[width=0.4in]{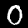}} &
        \parbox[c]{2.2em}{\includegraphics[width=0.4in]{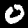}} &
        \parbox[c]{2.2em}{\includegraphics[width=0.4in]{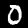}} &
        \parbox[c]{2.2em}{\includegraphics[width=0.4in]{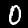}} &
        \parbox[c]{2.2em}{\includegraphics[width=0.4in]{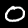}} &
        \parbox[c]{2.2em}{\includegraphics[width=0.4in]{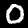}} &
        \parbox[c]{2.2em}{\includegraphics[width=0.4in]{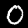}} &
        \parbox[c]{2.2em}{\includegraphics[width=0.4in]{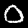}} \\
        \red{Classified as} & \red{0} & \red{0} & \red{0} & \red{0} & \red{0} & \red{0} & \red{0} & \red{0} & \red{0} & \red{0} \\
        
	\hline &&&&&&&&&& \vspace{-0.2cm} \\
      	\thead{ZODIAC} &
        \parbox[c]{2.2em}{\includegraphics[width=0.4in]{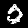}} &
        \parbox[c]{2.2em}{\includegraphics[width=0.4in]{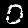}} &
        \parbox[c]{2.2em}{\includegraphics[width=0.4in]{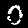}} &
        \parbox[c]{2.2em}{\includegraphics[width=0.4in]{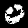}} &
        \parbox[c]{2.2em}{\includegraphics[width=0.4in]{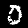}} &
        \parbox[c]{2.2em}{\includegraphics[width=0.4in]{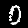}} &
        \parbox[c]{2.2em}{\includegraphics[width=0.4in]{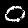}} &
        \parbox[c]{2.2em}{\includegraphics[width=0.4in]{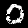}} &
        \parbox[c]{2.2em}{\includegraphics[width=0.4in]{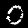}} &
        \parbox[c]{2.2em}{\includegraphics[width=0.4in]{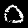}} \\
        Classified as & 9 & 2 & 2 & 9 & 3 & 7 & 9 & 2 & 5 & 2 \\        
        
	\hline &&&&&&&&&& \vspace{-0.2cm} \\
      	\thead{ZODIAC-PB} &
        \parbox[c]{2.2em}{\includegraphics[width=0.4in]{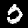}} &
        \parbox[c]{2.2em}{\includegraphics[width=0.4in]{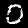}} &
        \parbox[c]{2.2em}{\includegraphics[width=0.4in]{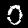}} &
        \parbox[c]{2.2em}{\includegraphics[width=0.4in]{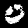}} &
        \parbox[c]{2.2em}{\includegraphics[width=0.4in]{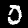}} &
        \parbox[c]{2.2em}{\includegraphics[width=0.4in]{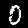}} &
        \parbox[c]{2.2em}{\includegraphics[width=0.4in]{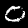}} &
        \parbox[c]{2.2em}{\includegraphics[width=0.4in]{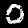}} &
        \parbox[c]{2.2em}{\includegraphics[width=0.4in]{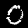}} &
        \parbox[c]{2.2em}{\includegraphics[width=0.4in]{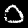}} \\
        Classified as & 5 & 2 & 5 & 5 & 3 & 2 & 9 & 2 & 5 & 2 \\

      \hline
  \end{tabular}
  \end{adjustbox}
  \end{center}
\end{table*}

\section{Conclusions}\label{zo_pb:sec:conclusions}

In this paper, we investigated the acceleration of ZO stochastic distributed nonconvex optimization problems and proposed ZODIAC-PB based on the primal--dual framework.
We demonstrated that the proposed algorithm achieves the convergence rate of $\mathcal{O}(\sqrt{p}/\sqrt{nT})$ for general nonconvex cost functions.
Additionally, we illustrated the efficacy of ZODIAC-PB through benchmark examples on a large-scale multi-agent topology in comparison with the existing state-of-the-art centralized and distributed ZO algorithms.

\section*{Acknowledgments}
The authors would like to thank Dr. Xinlei Yi and Mr. Yunlong Dong for their fruitful discussions on this work.

\bibliographystyle{IEEEtran}
\bibliography{zeroth_order}

\begin{thebibliography}{10}
\providecommand{\url}[1]{#1}
\csname url@samestyle\endcsname
\providecommand{\newblock}{\relax}
\providecommand{\bibinfo}[2]{#2}
\providecommand{\BIBentrySTDinterwordspacing}{\spaceskip=0pt\relax}
\providecommand{\BIBentryALTinterwordstretchfactor}{4}
\providecommand{\BIBentryALTinterwordspacing}{\spaceskip=\fontdimen2\font plus
\BIBentryALTinterwordstretchfactor\fontdimen3\font minus
  \fontdimen4\font\relax}
\providecommand{\BIBforeignlanguage}[2]{{%
\expandafter\ifx\csname l@#1\endcsname\relax
\typeout{** WARNING: IEEEtran.bst: No hyphenation pattern has been}%
\typeout{** loaded for the language `#1'. Using the pattern for}%
\typeout{** the default language instead.}%
\else
\language=\csname l@#1\endcsname
\fi
#2}}
\providecommand{\BIBdecl}{\relax}
\BIBdecl

\bibitem{conn2009introduction}
A.~R. Conn, K.~Scheinberg, and L.~N. Vicente, \emph{Introduction to
  Derivative-Free Optimization}.\hskip 1em plus 0.5em minus 0.4em\relax
  MPS-SIAM Series on Optimization. SIAM Philadelphia, 2009.

\bibitem{audet2017derivative}
C.~Audet and W.~Hare, \emph{Derivative-Free and Blackbox Optimization}.\hskip
  1em plus 0.5em minus 0.4em\relax Springer, 2017.

\bibitem{larson2019derivative}
J.~Larson, M.~Menickelly, and S.~M. Wild, ``Derivative-free optimization
  methods,'' \emph{Acta Numerica}, vol.~28, pp. 287--404, 2019.

\bibitem{spall2005introduction}
J.~C. Spall, \emph{Introduction to stochastic search and optimization:
  estimation, simulation, and control}.\hskip 1em plus 0.5em minus 0.4em\relax
  John Wiley \& Sons, 2005, vol.~65.

\bibitem{goodfellow2014explaining}
I.~J. Goodfellow, J.~Shlens, and C.~Szegedy, ``Explaining and harnessing
  adversarial examples,'' \emph{arXiv preprint arXiv:1412.6572}, 2014.

\bibitem{liu2018zeroth}
S.~Liu, B.~Kailkhura, P.-Y. Chen, P.~Ting, S.~Chang, and L.~Amini,
  ``Zeroth-order stochastic variance reduction for nonconvex optimization,'' in
  \emph{Advances in Neural Information Processing Systems}, 2018, pp.
  3727--3737.

\bibitem{chen2019zo}
X.~Chen, S.~Liu, K.~Xu, X.~Li, X.~Lin, M.~Hong, and D.~Cox, ``{ZO}-{A}da{MM}:
  Zeroth-order adaptive momentum method for black-box optimization,'' in
  \emph{Advances in Neural Information Processing Systems}, 2019, pp.
  7204--7215.

\bibitem{yang2019survey}
T.~Yang, X.~Yi, J.~Wu, Y.~Yuan, D.~Wu, Z.~Meng, Y.~Hong, H.~Wang, Z.~Lin, and
  K.~H. Johansson, ``A survey of distributed optimization,'' \emph{Annual
  Reviews in Control}, vol.~47, pp. 278--305, 2019.

\bibitem{yuan2014randomized}
D.~Yuan and D.~W. Ho, ``Randomized gradient-free method for multiagent
  optimization over time-varying networks,'' \emph{IEEE Transactions on Neural
  Networks and Learning Systems}, vol.~26, no.~6, pp. 1342--1347, 2014.

\bibitem{sahu2018distributed}
A.~K. Sahu, D.~Jakovetic, D.~Bajovic, and S.~Kar, ``Distributed zeroth order
  optimization over random networks: A {K}iefer-{W}olfowitz stochastic
  approximation approach,'' in \emph{IEEE Conference on Decision and Control},
  2018, pp. 4951--4958.

\bibitem{wang2019distributed}
Y.~Wang, W.~Zhao, Y.~Hong, and M.~Zamani, ``Distributed subgradient-free
  stochastic optimization algorithm for nonsmooth convex functions over
  time-varying networks,'' \emph{SIAM Journal on Control and Optimization},
  vol.~57, no.~4, pp. 2821--2842, 2019.

\bibitem{pang2019randomized}
Y.~Pang and G.~Hu, ``Randomized gradient-free distributed optimization methods
  for a multi-agent system with unknown cost function,'' \emph{IEEE
  Transactions on Automatic Control}, vol.~65, no.~1, pp. 333--340, 2020.

\bibitem{yuan2015gradient}
D.~Yuan, S.~Xu, and J.~Lu, ``Gradient-free method for distributed multi-agent
  optimization via push-sum algorithms,'' \emph{International Journal of Robust
  and Nonlinear Control}, vol.~25, no.~10, pp. 1569--1580, 2015.

\bibitem{yu2019distributed}
Z.~Yu, D.~W. Ho, and D.~Yuan, ``Distributed randomized gradient-free mirror
  descent algorithm for constrained optimization,'' \emph{arXiv preprint
  arXiv:1903.04157}, 2019.

\bibitem{tang2020distributedzero}
Y.~Tang, J.~Zhang, and N.~Li, ``Distributed zero-order algorithms for nonconvex
  multi-agent optimization,'' \emph{arXiv preprint arXiv:1908.11444v3}, 2020.

\bibitem{beznosikov2019derivative}
A.~Beznosikov, E.~Gorbunov, and A.~Gasnikov, ``Derivative-free method for
  composite optimization with applications to decentralized distributed
  optimization,'' \emph{arXiv preprint arXiv:1911.10645v4}, 2020.

\bibitem{hajinezhad2019zone}
D.~Hajinezhad, M.~Hong, and A.~Garcia, ``{ZONE}: Zeroth-order nonconvex
  multiagent optimization over networks,'' \emph{IEEE Transactions on Automatic
  Control}, vol.~64, no.~10, pp. 3995--4010, 2019.

\bibitem{yi2019linear}
X.~Yi, S.~Zhang, T.~Yang, T.~Chai, and K.~H. Johansson, ``Linear convergence of
  first- and zeroth-order algorithms for distributed nonconvex optimization
  under the {P}olyak-{{\L}}ojasiewicz condition,'' \emph{arXiv preprint
  arXiv:1912.12110}, 2019.

\bibitem{yi2021zerothorder}
X.~Yi, S.~Zhang, T.~Yang, and K.~H. Johansson, ``Zeroth-order algorithms for
  stochastic distributed nonconvex optimization,'' \emph{2106.02958}, 2021.

\bibitem{zhang2021convergence}
S.~Zhang, Y.~Dong, D.~Xie, L.~Yao, C.~P. Bailey, and S.~Fu, ``Convergence
  analysis of nonconvex distributed stochastic zeroth-order coordinate
  method,'' in \emph{IEEE Conference on Decision and Control}, 2021.

\bibitem{Yi2018distributed}
X.~Yi, L.~Yao, T.~Yang, J.~George, and K.~H. Johansson, ``Distributed
  optimization for second-order multi-agent systems with dynamic
  event-triggered communication,'' in \emph{IEEE Conference on Decision and
  Control}, 2018, pp. 3397--3402.

\bibitem{sharma2020zeroth}
P.~Sharma, K.~Xu, S.~Liu, P.-Y. Chen, X.~Lin, and P.~K. Varshney,
  ``Zeroth-order hybrid gradient descent: Towards a principled black-box
  optimization framework,'' \emph{arXiv preprint arXiv:2012.11518}, 2020.

\bibitem{zhou2020pbsgd}
B.~Zhou, J.~Liu, W.~Sun, R.~Chen, C.~J. Tomlin, and Y.~Yuan, ``pbsgd: Powered
  stochastic gradient descent methods for accelerated non-convex
  optimization.'' in \emph{IJCAI}, 2020, pp. 3258--3266.

\bibitem{yuan2019powerball}
Y.~Yuan, M.~Li, J.~Liu, and C.~Tomlin, ``On the powerball method: Variants of
  descent methods for accelerated optimization,'' \emph{IEEE Control Systems
  Letters}, vol.~3, no.~3, pp. 601--606, 2019.

\bibitem{zhang2021accelerated}
S.~Zhang and C.~P. Bailey, ``Accelerated primal-dual algorithm for distributed
  non-convex optimization,'' \emph{arXiv preprint arXiv:2108.06050}, 2021.

\bibitem{xu2020second}
P.~Xu, F.~Roosta, and M.~W. Mahoney, ``Second-order optimization for non-convex
  machine learning: An empirical study,'' in \emph{Proceedings of the 2020 SIAM
  International Conference on Data Mining}.\hskip 1em plus 0.5em minus
  0.4em\relax SIAM, 2020, pp. 199--207.

\bibitem{liu2019signsgd}
S.~Liu, P.-Y. Chen, X.~Chen, and M.~Hong, ``sign{SGD} via zeroth-order
  oracle,'' in \emph{International Conference on Learning Representations},
  2019.

\bibitem{chen2017zoo}
P.-Y. Chen, H.~Zhang, Y.~Sharma, J.~Yi, and C.-J. Hsieh, ``{ZOO}: Zeroth order
  optimization based black-box attacks to deep neural networks without training
  substitute models,'' in \emph{ACM Workshop on Artificial Intelligence and
  Security}, 2017, pp. 15--26.

\bibitem{carlini2017towards}
N.~Carlini and D.~Wagner, ``Towards evaluating the robustness of neural
  networks,'' in \emph{2017 ieee symposium on security and privacy (sp)}.\hskip
  1em plus 0.5em minus 0.4em\relax IEEE, 2017, pp. 39--57.

\bibitem{lian2016Comprehensive}
X.~Lian, H.~Zhang, C.-J. Hsieh, Y.~Huang, and J.~Liu, ``A comprehensive linear
  speedup analysis for asynchronous stochastic parallel optimization from
  zeroth-order to first-order,'' in \emph{Advances in Neural Information
  Processing Systems}, 2016, pp. 3054--3062.

\bibitem{ghadimi2013stochastic}
S.~Ghadimi and G.~Lan, ``Stochastic first-and zeroth-order methods for
  nonconvex stochastic programming,'' \emph{SIAM Journal on Optimization},
  vol.~23, no.~4, pp. 2341--2368, 2013.

\end{thebibliography}

\appendix\label{zo_pb:sec:app}

\section*{Proof of Lemma~\ref{zo_pb:lemma:sg2-T}}\label{zo_pb:proof:lemma4}
\begin{proof}

Consider the following Lyapunov candidate function 
\begin{align}
W_{k}= &\underbrace{\frac{1}{2}\|\bsx_{k}\|^2_{\bsK}}_{W_{1, k}} + \underbrace{\frac{1}{2}\Big\|\bsv_k+\frac{1}{\beta}\bsg_k^0\Big\|^2_{\bsQ+\kappa_1\bsK}}_{W_{2, k}} \nonumber \\ &+ \underbrace{\bsx_k^\top\bsK\Big(\bm{v}_k+\frac{1}{\beta}\bsg_k^0\Big)}_{W_{3, k}} + \underbrace{n(f(\bar{x}_k)-f^*)}_{W_{4, k}}
\end{align}
where $\bsQ=R\Lambda^{-1}_1R^{\top}\otimes {\bm I}_p$.
Additionally, we denote $g^s_{i,k}=\nabla \mathbb{E}[f_i(x+\delta_{i,k} e_{i})]$, $\bsg^s_k=\col(g^s_{1,k},\dots,g^s_{n,k})$, $\bar{\bsg}^s_k=\bsH\bsg^s_k$, $\bar{g}^e_k=\frac{1}{n}({\bm 1}_n^\top\otimes{\bm I}_p)\bsg^e_k$, and $\bar{\bsg}^e_k={\bm 1}_n\otimes\bar{g}^e_k=\bsH\bsg^e_k$.

(i) We have
\begin{align}
&\mathbb{E}[W_{1,k+1}]
=\mathbb{E}\Big[\frac{1}{2}\|\bm{x}_{k+1} \|^2_{\bsK}\Big]\nonumber\\
&\overset{\mathrm{Eq.~\ref{zo_pb:alg:random-pd-x}}}{=\joinrel=}\mathbb{E}\Big[\frac{1}{2}\|\bm{x}_k-\eta(\alpha\bsL\bm{x}_k+\beta\bm{v}_k+\sigma(\bsg^e_k, \gamma)) \|^2_{\bsK}\Big]\nonumber\\
&\overset{\text{(a)}}{=}\mathbb{E}\Big[\frac{1}{2}\|\bm{x}_k\|^2_{\bsK}-\eta\alpha\|\bsx_k\|^2_{\bsL}
+\frac{1}{2}\eta^2\alpha^2\|\bsx_k\|^2_{\bsL^2}
\nonumber\\
&~~~-\eta\beta\bsx^\top_k({\bm I}_{np}-\eta\alpha\bsL)\bsK\Big(\bm{v}_k+\frac{1}{\beta}\sigma(\bsg^e_k, \gamma)\Big)\nonumber\\
&~~~+\frac{1}{2}\eta^2\beta^2\Big\|\bm{v}_k+\frac{1}{\beta}\sigma(\bsg^e_k, \gamma)\Big\|^2_{\bsK}\Big]\nonumber\\
&\overset{\text{(b)}}{=}W_{1,k}-\|\bsx_k\|^2_{\eta\alpha\bsL
-\frac{1}{2}\eta^2\alpha^2\bsL^2}\nonumber\\
&~~~-\eta\beta\bsx^\top_k({\bm I}_{np}-\eta\alpha\bsL)\bsK\Big(\bm{v}_k
+\frac{1}{\beta}\bsg^s_k\Big)\nonumber\\
&~~~+\frac{1}{2}\eta^2\beta^2\mathbb{E}\Big[\Big\|\bm{v}_k+\frac{1}{\beta}\bsg_k^0
+\frac{1}{\beta}\sigma(\bsg^e_k, \gamma)-\frac{1}{\beta}\bsg_k^0\Big\|^2_{\bsK}\Big]\nonumber\\
&\overset{\text{(c)}}{\le} W_{1,k}-\|\bsx_k\|^2_{\eta\alpha\bsL
-\frac{1}{2}\eta^2\alpha^2\bsL^2}\nonumber\\
&~~~-\eta\beta\bsx^\top_k\bsK\Big(\bm{v}_k+\frac{1}{\beta}\bsg_k^0\Big)\nonumber\\
&~~~+\frac{1}{2}\eta\|\bm{x}_k\|^2_{\bsK}
+\frac{1}{2}\eta\|\bsg^s_k-\bsg_k^0\|^2\nonumber\\
&~~~+\frac{1}{2}\eta^2\alpha^2\|\bm{x}_k\|^2_{\bsL^2}
+\frac{1}{2}\eta^2\beta^2\Big\|\bm{v}_k+\frac{1}{\beta}\bsg_k^0\Big\|^2_{\bsK}\nonumber\\
&~~~+\frac{1}{2}\eta^2\alpha^2\|\bm{x}_k\|^2_{\bsL^2}
+\frac{1}{2}\eta^2\|\bsg^s_k-\bsg_k^0\|^2\nonumber\\
&~~~+\eta^2\beta^2\Big\|\bm{v}_k+\frac{1}{\beta}\bsg_k^0\Big\|^2_{\bsK}
+\eta^2\mathbb{E}[\|\sigma(\bsg^e_k, \gamma)-\bsg_k^0\|^2]\nonumber\\
&\overset{\text{(d)}}{\le} W_{1,k}-\|\bsx_k\|^2_{\eta\alpha\bsL-\frac{1}{2}\eta\bsK
-\frac{3}{2}\eta^2\alpha^2\bsL^2-\eta(1+5\eta)L_f^2\bsK}
\nonumber\\
&~~~-\eta\beta\bsx^\top_k\bsK\Big(\bm{v}_k+\frac{1}{\beta}\bsg_k^0\Big)
+\Big\|\bm{v}_k+\frac{1}{\beta}\bsg_k^0\Big\|^2_{\frac{3}{2}\eta^2\beta^2\bsK}\nonumber\\
&~~~+nL_f^2\eta\Big[\frac{p}{4}+(\frac{p}{4}+4)\eta\Big]\delta_k^2+2\eta^2\mathbb{E}[\|\sigma(\bsg^e_k, \gamma)\|^2],\label{zo_pb:zerosg:v1k}
\end{align}
where (a) holds due to Lemma~1 and~2 in~\cite{Yi2018distributed}; (b) holds due to $\mathbb{E}[\bsg^e_k] = \bsg^s_k$ and that $x_{i,k}$ and $v_{i,k}$ are independent of $u_{i,k}$ and $\xi_{i,k}$; (c) holds due to the Cauchy--Schwarz inequality and $\rho(\bsK)=1$; (d) holds due to $\|\bsg^s_k-\bsg_k^0\|^2\le 2L_f^2\|\bsx_{k}\|^2_{\bsK}+\frac{np}{2}L_f^2\delta_k^2$ and $\mathbb{E}[\|\bsg_k^0-\sigma(\bsg^e_k, \gamma)\|^2]\le 4L_f^2\|\bsx_{k}\|^2_{\bsK}+4nL_f^2\delta_k^2+2\mathbb{E}[\|\sigma(\bsg^e_k, \gamma)\|^2]$.

(ii)
\begin{align}
W_{2,k+1}
=\frac{1}{2}\Big\|\bsv_{k+1}+\frac{1}{\beta}\bsg_{k+1}^0\Big\|^2_{\bsQ+\kappa_1\bsK}\label{zo_pb:zerosg:v2k-1}
\end{align}
\begin{align}
&\overset{\mathrm{Eq.~\ref{zo_pb:alg:random-pd-q}}}{=\joinrel=}\frac{1}{2}\Big\|\bm{v}_k+\frac{1}{\beta}\bsg_{k}^0+\eta\beta\bsL\bm{x}_k
+\frac{1}{\beta}(\bsg_{k+1}^0-\bsg_{k}^0) \Big\|^2_{\bsQ+\kappa_1\bsK}\nonumber\\
&\overset{\text{(e)}}{=}W_{2,k}+\eta\beta\bsx^\top_k(\bsK+\kappa_1\bsL)\Big(\bm{v}_k+\frac{1}{\beta}\bsg_k^0\Big)\nonumber\\
&~~~+\|\bsx_k\|^2_{\frac{1}{2}\eta^2\beta^2(\bsL+\kappa_1\bsL^2)}
+\frac{1}{2\beta^2}\Big\|\bsg_{k+1}^0-\bsg_{k}^0\Big\|^2_{\bsQ+\kappa_1\bsK}\nonumber\\
&~~~+\frac{1}{\beta}\Big(\bm{v}_k+\frac{1}{\beta}\bsg_{k}^0
+\eta\beta\bsL\bm{x}_k\Big)^\top(\bsQ
+\kappa_1\bsK)(\bsg_{k+1}^0-\bsg_{k}^0)\nonumber\\
&\overset{\text{(f)}}{\le} W_{2,k}+\eta\beta\bsx^\top_k(\bsK+\kappa_1\bsL)\Big(\bm{v}_k+\frac{1}{\beta}\bsg_k^0\Big)\nonumber\\
&~~~+\|\bsx_k\|^2_{\frac{1}{2}\eta^2\beta^2(\bsL+\kappa_1\bsL^2)}
+\frac{1}{2\beta^2}\|\bsg_{k+1}^0-\bsg_{k}^0\|^2_{\bsQ+\kappa_1\bsK}\nonumber\\
&~~~+\frac{\eta}{2}\Big\|\bm{v}_k+\frac{1}{\beta}\bsg_{k}^0\Big\|^2_{\bsQ+\kappa_1\bsK}
+\frac{1}{2\eta\beta^2}\|\bsg_{k+1}^0-\bsg_{k}^0\|^2_{\bsQ+\kappa_1\bsK}\nonumber\\
&~~~+\frac{1}{2}\eta^2\beta^2\|\bsL\bm{x}_k\|^2_{\bsQ+\kappa_1\bsK}
+\frac{1}{2\beta^2}\|\bsg_{k+1}^0-\bsg_{k}^0\|^2_{\bsQ+\kappa_1\bsK}\nonumber\\
&\overset{\text{(g)}}{=} W_{2,k}+\eta\beta\bsx^\top_k(\bsK+\kappa_1\bsL)\Big(\bm{v}_k+\frac{1}{\beta}\bsg_k^0\Big)\nonumber\\
&~~~+\|\bsx_k\|^2_{\eta^2\beta^2(\bsL+\kappa_1\bsL^2)}
+\Big\|\bm{v}_k+\frac{1}{\beta}\bsg_{k}^0
\Big\|^2_{\frac{1}{2}\eta(\bsQ+\kappa_1\bsK)}\nonumber\\
&~~~+\frac{1}{\beta^2}\Big(1+\frac{1}{2\eta}\Big)
\|\bsg_{k+1}^0-\bsg_{k}^0\|^2_{\bsQ+\kappa_1\bsK}\nonumber\\
&\overset{\text{(h)}}{\le} W_{2,k}+\eta\beta\bsx^\top_k(\bsK+\kappa_1\bsL)\Big(\bm{v}_k+\frac{1}{\beta}\bsg_k^0\Big)\nonumber\\
&~~~+\|\bsx_k\|^2_{\eta^2\beta^2(\bsL+\kappa_1\bsL^2)}
+\Big\|\bm{v}_k+\frac{1}{\beta}\bsg_{k}^0
\Big\|^2_{\frac{1}{2}\eta(\bsQ+\kappa_1\bsK)}\nonumber\\
&~~~+\frac{1}{\beta^2}\Big(1+\frac{1}{2\eta}\Big)\Big(\frac{1}{\rho_2(L)}+\kappa_1\Big)
\|\bsg_{k+1}^0-\bsg_{k}^0\|^2\nonumber\\
&\overset{\text{(i)}}{\le} W_{2,k}+\eta\beta\bsx^\top_k(\bsK+\kappa_1\bsL)\Big(\bm{v}_k+\frac{1}{\beta}\bsg_k^0\Big)\nonumber\\
&~~~+\|\bsx_k\|^2_{\eta^2\beta^2(\bsL+\kappa_1\bsL^2)}
+\Big\|\bm{v}_k+\frac{1}{\beta}\bsg_{k}^0
\Big\|^2_{\frac{1}{2}\eta(\bsQ+\kappa_1\bsK)}\nonumber\\
&~~~+\frac{\eta}{\beta^2}\Big(\eta+\frac{1}{2}\Big)
\Big(\frac{1}{\rho_2(L)}+\kappa_1\Big)L_f^2\|\bar{\bsg}^e_{k}\|^2,\label{zo_pb:zerosg:v2k-2}
\end{align}
where (e) holds due to (a) holds due to Lemma~1 and~2 in~\cite{Yi2018distributed}; (f) holds due to the Cauchy--Schwarz inequality; (g) holds due to Lemma~1 and~2 in~\cite{Yi2018distributed}; (h) holds due to $\rho(\bsQ+\kappa_1\bsK)\le\rho(\bsQ)+\kappa_1\rho(\bsK)$, and $\rho(\bsK)=1$; (i) holds due to $\|\bsg^0_{k+1}-\bsg^0_{k}\|^2\le \eta^2L_f^2\|\bar{\bsg}^e_{k}\|^2
\le\eta^2L_f^2\|\bsg^e_{k}\|^2$.

Moreover, we have the following two inequalities hold:
\begin{equation}\label{zo_pb:zerosg:v2k-3}
\|\bsg_{k+1}^0\|^2_{\bsQ+\kappa_1\bsK}
\le\Big(\frac{1}{\rho_2(L)}+\kappa_1\Big)\|\bsg_{k+1}^0\|^2.
\end{equation}

\begin{align}\label{zo_pb:zerosg:v2k-4}
\Big\|\bm{v}_k+\frac{1}{\beta_k}\bsg_{k}^0\Big\|^2_{\bsQ+\kappa_1\bsK}
\le\Big(\frac{1}{\rho_2(L)}+\kappa_1\Big)\Big\|\bm{v}_k+\frac{1}{\beta_k}\bsg_{k}^0
\Big\|^2_{\bsK}.
\end{align}

Then, from \eqref{zo_pb:zerosg:v2k-1}--\eqref{zo_pb:zerosg:v2k-4}, we have
\begin{align}
&W_{2,k+1}\nonumber\\
&\le W_{2,k}
+\eta\beta\bsx^\top_k(\bsK+\kappa_1\bsL)\Big(\bm{v}_k+\frac{1}{\beta}\bsg_k^0\Big)\nonumber\\
&~~~+\frac{1}{2}\eta\Big(\frac{1}{\rho_2(L)}+\kappa_1\Big)
\Big\|\bm{v}_k+\frac{1}{\beta}\bsg_{k}^0\Big\|^2_{\bsK}\nonumber\\
&~~~+\|\bsx_k\|^2_{\eta^2\beta^2(\bsL+\kappa_1\bsL^2)}\nonumber\\
&~~~+\frac{\eta}{\beta^2}\Big(\eta+\frac{1}{2}\Big)
\Big(\frac{1}{\rho_2(L)}+\kappa_1\Big)L_f^2\|\bar{\bsg}^e_{k}\|^2.
\label{zo_pb:zerosg:v2k}
\end{align}

(iii) We have
\begin{align}
W_{3,k+1}=\bsx_{k+1}^\top\bsK\Big(\bm{v}_{k+1}+\frac{1}{\beta}\bsg_{k+1}^0\Big).\label{zo_pb:zerosg:v3k-1}
\end{align}

\begin{align}
&\mathbb{E}\Big[W_{3,k+1}\Big]\nonumber\\
&\overset{\mathrm{Eq.~\ref{zo_pb:alg:random-pd}}}{=\joinrel=}\mathbb{E}\Big[(\bm{x}_k-\eta(\alpha\bsL\bm{x}_k+\beta\bm{v}_k
+\bsg_k^0+\sigma(\bsg^e_k, \gamma)-\bsg_k^0))^\top
\nonumber\\
&~~~\bsK\Big(\bm{v}_k+\frac{1}{\beta}\bsg_{k}^0+\eta\beta\bsL\bm{x}_k
+\frac{1}{\beta}(\bsg_{k+1}^0-\bsg_{k}^0)\Big)\Big]\nonumber\\
&\overset{\text{(j)}}{=}\bm{x}_k^\top(\bsK-\eta(\alpha+\eta\beta^2)\bsL)\Big(\bm{v}_k+\frac{1}{\beta}\bsg_{k}^0\Big)+\|\bm{x}_k\|^2_{\eta\beta(\bsL-\eta\alpha\bsL^2)}\nonumber\\
&~~~+\frac{1}{\beta}\bm{x}_k^\top(\bsK-\eta\alpha\bsL)
\mathbb{E}[\bsg_{k+1}^0-\bsg_{k}^0]-\eta\beta\Big\|\bm{v}_k+\frac{1}{\beta}\bsg_{k}^0\Big\|^2_{\bsK}\nonumber\\
&~~~-\eta\Big(\bm{v}_k+\frac{1}{\beta}\bsg_{k}^0\Big)^\top\bsK
\mathbb{E}[\bsg_{k+1}^0-\bsg_{k}^0]\nonumber\\
&~~~-\eta(\bsg_k^s-\bsg_k^0)^\top
\bsK\Big(\bm{v}_k+\frac{1}{\beta}\bsg_{k}^0+\eta\beta\bsL\bm{x}_k\Big)\nonumber\\
&~~~-\frac{1}{\beta}\mathbb{E}[\eta(\sigma(\bsg^e_k, \gamma)-\bsg_k^0)^\top
\bsK(\bsg_{k+1}^0-\bsg_{k}^0)]\nonumber\\
&\overset{\text{(k)}}{\le}\bm{x}_k^\top(\bsK-\eta\alpha\bsL)\Big(\bm{v}_k+\frac{1}{\beta}\bsg_{k}^0\Big)
+\frac{1}{2}\eta^2\beta^2\|\bsL\bsx_k\|^2\nonumber\\
&~~~+\frac{1}{2}\eta^2\beta^2\Big\|\bm{v}_k+\frac{1}{\beta}\bsg_{k}^0\Big\|^2_{\bsK}
+\|\bm{x}_k\|^2_{\eta\beta(\bsL-\eta\alpha\bsL^2)}\nonumber\\
&~~~+\frac{1}{2}\eta\|\bm{x}_k\|^2_\bsK
+\frac{1}{2\eta\beta^2}\mathbb{E}[\|\bsg_{k+1}^0-\bsg_{k}^0\|^2\nonumber\\
&~~~+\frac{1}{2}\eta^2\alpha^2\|\bsL\bm{x}_k\|^2
+\frac{1}{2\beta^2}\mathbb{E}[\|\bsg_{k+1}^0-\bsg_{k}^0\|^2]\nonumber\\
&~~~
-\eta\beta\Big\|\bm{v}_k+\frac{1}{\beta}\bsg_{k}^0\Big\|^2_{\bsK}\nonumber\\
&~~~+\frac{1}{2}\eta^2\beta^2\Big\|\bm{v}_k+\frac{1}{\beta}\bsg_{k}^0\Big\|^2_{\bsK}
+\frac{1}{2\beta^2}\mathbb{E}[\|\bsg_{k+1}^0-\bsg_{k}^0\|^2]\nonumber\\
&~~~+\frac{1}{2}\eta\|\bsg_k^s-\bsg_k^0\|^2
+\frac{1}{2}\eta\Big\|\bm{v}_k+\frac{1}{\beta}\bsg_{k}^0\Big\|^2_{\bsK}\nonumber\\
&~~~
+\frac{1}{2}\eta^2\|\bsg_k^s-\bsg_k^0\|^2
+\frac{1}{2}\eta^2\beta^2\|\bsL\bm{x}_k\|^2\nonumber\\
&~~~
+\frac{1}{2}\eta^2\mathbb{E}[\|\sigma(\bsg^e_k, \gamma)-\bsg_k^0\|^2]
+\frac{1}{2\beta^2}\mathbb{E}[\|\bsg_{k+1}^0-\bsg_{k}^0\|^2]\nonumber\\
&\overset{\text{(l)}}{\le} W_{3,k}
-\eta\alpha\bm{x}_k^\top\bsL\Big(\bm{v}_k+\frac{1}{\beta}\bsg_{k}^0\Big)\nonumber\\
&~~~+\|\bm{x}_k\|^2_{\eta(\beta\bsL+\frac{1}{2}\bsK)
+\eta^2(\frac{1}{2}\alpha^2-\alpha\beta+\beta^2)\bsL^2
+\eta(1+3\eta)L_f^2\bsK}\nonumber\\
&~~~+\eta^2\Big[1 + (\frac{1}{2\eta\beta^2}+\frac{3}{2\beta^2}\Big)L_f^2\Big]\mathbb{E}[\|\sigma(\bsg^e_k, \gamma)\|^2]\nonumber\\
&~~~+nL_f^2\eta\Big[\frac{p}{4}+(\frac{p}{4}+2)\eta\Big]\delta^2_k\nonumber\\
&~~~-\Big\|\bm{v}_k+\frac{1}{\beta}\bsg_{k}^0\Big\|^2_{\eta(\beta-\frac{1}{2}
-\eta\beta^2)\bsK}.
\label{zerosg:v3k-2}
\end{align}

where (j) holds since $K_nL=LK_n=L$, $\mathbb{E}[\bsg^e_k] = \bsg^s_k$, and that $x_{i,k}$ and $v_{i,k}$ are independent; (k) holds due to the Cauchy--Schwarz inequality, the Jensen's inequality, and $\rho(\bsK)=1$; (l) holds due to $\|\bsg^s_k-\bsg_k^0\|^2\le 2L_f^2\|\bsx_{k}\|^2_{\bsK}+\frac{np}{2}L_f^2\delta_k^2$ and $\mathbb{E}[\|\bsg_k^0-\bsg^e_k\|^2]\le 4L_f^2\|\bsx_{k}\|^2_{\bsK}+4nL_f^2\delta_k^2+2\mathbb{E}[\|\sigma(\bsg^e_k, \gamma)\|^2]$..

(iv) We have
\begin{align}
&\mathbb{E}[W_{4,k+1}]=\mathbb{E}[n(f(\bar{x}_{k+1})-f^*)]
=\mathbb{E}[\tilde{f}(\bar{\bsx}_{k+1})-nf^*]\nonumber\\
&=\mathbb{E}[\tilde{f}(\bar{\bsx}_k)-nf^*+\tilde{f}(\bar{\bsx}_{k+1})
-\tilde{f}(\bar{\bsx}_k)]\nonumber\\
&\overset{\text{(m)}}{\le}\mathbb{E}[\tilde{f}(\bar{\bsx}_k)-nf^*
-\eta(\bar{\bsg}_{k}^e)^\top\bsg^0_k
+\frac{1}{2}\eta^2L_f\|\bar{\bsg}_{k}^e\|^2]\nonumber\\
&\overset{\text{(n)}}{=}W_{4,k}
-\eta(\bar{\bsg}_{k}^s)^\top\bar{\bsg}^0_k
+\frac{1}{2}\eta^2L_f\mathbb{E}[\|\bar{\bsg}_{k}^e\|^2]\nonumber\\
&\overset{\text{(o)}}{=}W_{4,k}
-\frac{1}{2}\eta(\bar{\bsg}_{k}^s)^\top(\bar{\bsg}^s_k+\bar{\bsg}^0_k-\bar{\bsg}^s_k)\nonumber\\
&~~~-\frac{1}{2}\eta(\bar{\bsg}^s_{k}-\bar{\bsg}^0_k+\bar{\bsg}^0_k)^\top\bar{\bsg}^0_k
+\frac{1}{2}\eta^2L_f\mathbb{E}[\|\bar{\bsg}_{k}^e\|^2]\nonumber\\
&\overset{\text{(p)}}{\le} W_{4,k}-\frac{1}{4}\eta(\|\bar{\bsg}^s_{k}\|^2
-\|\bar{\bsg}^0_k-\bar{\bsg}^s_k\|^2+\|\bar{\bsg}_{k}^0\|^2\nonumber\\
&~~~-\|\bar{\bsg}^0_k-\bar{\bsg}^s_k\|^2)
+\frac{1}{2}\eta^2L_f\mathbb{E}[\|\bar{\bsg}_{k}^e\|^2]\nonumber\\
&\overset{\text{(q)}}{\le} W_{4,k}-\frac{1}{4}\eta\|\bar{\bsg}^s_{k}\|^2
+\|\bsx_k\|^2_{\eta L_f^2\bsK}\nonumber\\
&~~~+\frac{np}{4}\eta L_f^2\delta^2_k-\frac{1}{4}\eta\|\bar{\bsg}_{k}^0\|^2
+\frac{1}{2}\eta^2L_f\mathbb{E}[\|\bar{\bsg}^e_{k}\|^2],\label{zerosg:v4k}
\end{align}
where (m) holds since that $\tilde{f}$ is smooth; (n) holds due to $\mathbb{E}[\bsg^e_k] = \bsg^s_k$, $x_{i,k}$ and $v_{i,k}$ are independent; (o) holds due to $(\bar{\bsg}_{k}^s)^\top\bsg^0_k=(\bsg_{k}^s)^\top\bsH\bsg^0_k=(\bsg_{k}^s)^\top\bsH\bsH\bsg^0_k
=(\bar{\bsg}_{k}^s)^\top\bar{\bsg}^0_k$; (p) holds due to the Cauchy--Schwarz inequality; and (q) holds due to  $\|\bsg^s_k-\bsg_k^0\|^2\le 2L_f^2\|\bsx_{k}\|^2_{\bsK}+\frac{np}{2}L_f^2\delta_k^2$.

(v) 
Define $W_{k+1} = \sum_{i = 1}^{4}W_{i, k+1}$ and then we have the following inequality holds.

\begin{align}
&\mathbb{E}[W_{k+1}]\nonumber\\
&\le W_{k}-\|\bsx_k\|^2_{\eta\alpha\bsL-\frac{1}{2}\eta\bsK
-\frac{3}{2}\eta^2\alpha^2\bsL^2-\eta(1+5\eta)L_f^2\bsK}\nonumber\\
&~~~+\Big\|\bm{v}_k+\frac{1}{\beta}\bsg_k^0\Big\|^2_{\frac{3}{2}\eta^2\beta^2\bsK}+nL_f^2\eta\Big[\frac{p}{4}+(\frac{p}{4}+4)\eta\Big]\delta_k^2\nonumber\\
&~~~+2\eta^2
\mathbb{E}[\|\sigma(\bsg^e_k, \gamma)\|^2]\nonumber\\
&~~~+\frac{1}{2}\eta\Big(\frac{1}{\rho_2(L)}+\kappa_1\Big)
\Big\|\bm{v}_k+\frac{1}{\beta}\bsg_{k}^0\Big\|^2_{\bsK}\nonumber\\
&~~~+\|\bsx_k\|^2_{\eta^2\beta^2(\bsL+\kappa_1\bsL^2)}\nonumber\\
&~~~+\frac{\eta}{\beta^2}\Big(\eta+\frac{1}{2}\Big)\Big(\frac{1}{\rho_2(L)}+\kappa_1\Big)L_f^2
\mathbb{E}[\|\bar{\bsg}^e_{k}\|^2]\nonumber\\
&~~~+\|\bm{x}_k\|^2_{\eta(\beta\bsL+\frac{1}{2}\bsK)
+\eta^2(\frac{1}{2}\alpha^2-\alpha\beta+\beta^2)\bsL^2
+\eta(1+3\eta)L_f^2\bsK}\nonumber\\
&~~~+\eta^2\Big[1 + (\frac{1}{2\eta\beta^2}+\frac{3}{2\beta^2}\Big)L_f^2\Big]\mathbb{E}[\|\sigma(\bsg^e_k, \gamma)\|^2]\nonumber\\
&~~~+nL_f^2\eta\Big[\frac{p}{4}+(\frac{p}{4}+2)\eta\Big]\delta^2_k\nonumber\\
&~~~-\Big\|\bm{v}_k+\frac{1}{\beta}\bsg_{k}^0\Big\|^2_{\eta(\beta-\frac{1}{2}
-\eta\beta^2)\bsK}-\frac{1}{4}\eta\|\bar{\bsg}^s_{k}\|^2\nonumber\\
&~~~+\|\bsx_k\|^2_{\eta L_f^2\bsK}\nonumber\\
&~~~+\frac{np}{4}\eta L_f^2\delta^2_k-\frac{1}{4}\eta\|\bar{\bsg}_{k}^0\|^2
+\frac{1}{2}\eta^2 L_f\mathbb{E}[\|\bar{\bsg}^e_{k}\|^2]\nonumber\\
&\overset{\text{(r)}}{\le} W_{k}-\|\bsx_k\|^2_{\eta\bsM_{1}-\eta^2\bsM_{2}-b_{1}\bsK}-\Big\|\bm{v}_k+\frac{1}{\beta}\bsg_{k}^0\Big\|^2_{b^0_{2}\bsK}\nonumber\\
&~~~-\eta\Big(\frac{1}{4}-6c_1(p-1)\Big)\|\bar{\bsg}^0_{k}\|^2\nonumber\\
&~~~+\underbrace{c_1\Big[ 6(p-1)\sigma^2_2+(\frac{3}{n_c}+2)p\sigma^2_1 \Big]}_{\mathcal{O}(np)\eta^2}+\underbrace{c_{3}\eta\delta_k^2L_f^2}_{\mathcal{O}(np^2)\eta\delta_k^2},
\label{zo_pb:zerosg:vkLya}
\end{align}
where (r) holds due to \eqref{zo_pb:zerosg:rand-grad-esti2}, \eqref{zo_pb:zerosg:rand-grad-esti4}, $\alpha=\kappa_1\beta$,  $\eta=\frac{\kappa_2}{\beta}$, and
\begin{align*}
\bsM_{1}&=(\alpha-\beta)\bsL-\Big(1+3L_f^2+\frac{6L_f^4\kappa_1}{\beta^2}(p-1)\Big)\bsK,\\
\bsM_{2}&=\beta^2\bsL+(2\alpha^2+\beta^2)\bsL^2+8L_f^2\bsK\\
&\quad+6(p-1)\Big(3+\frac{1}{2}L_f+\frac{2L_f^{2}}{\beta^2}\kappa_1 + \frac{L_f^2}{2\beta^2}\Big)\bsK,\\
\kappa_3&=\frac{1}{\rho_2(L)}+\kappa_1+1,\\
b^0_{2}&=\frac{1}{2}\eta(2\beta-\kappa_3)-2.5\kappa_2^2,\\
b_{1}
&=6p\kappa_3L_f^4\frac{\eta}{\beta^2}+12p(\kappa_3+1)L_f^4\frac{\eta^2}{\beta^2},\\
c_1&= \Big(3+\frac{1}{2}L_f+\frac{2L_f^{2}}{\beta^2}\kappa_1 + \frac{L_f^2}{2\beta^2}\Big)n\eta^2 + \frac{L_f^2\kappa_1}{\beta^2}n\eta,\\
c_2 &=\frac{3}{4}pn+\eta n(\frac{p}{2} +6) +\frac{p^2L_f^2}{2\beta^2}\kappa_1,\\
c_3 &=c_2 + \Big(c_1 - \frac{L_f^2\kappa_1}{\beta^2}n\eta\Big)p^2\eta.		
\end{align*}

Consider $p\geq 1$, $\alpha=\kappa_1\beta$, $\kappa_1>1$, $\beta$ is large enough, and $\eta=\frac{\kappa_2}{\beta}$, we have
\begin{align}
\eta\bsM_{1}&\ge [(\kappa_1 - 1)\rho_2(L) - 1]\kappa_2\bsK.\label{zo_pb:zerosg:m1-rand-pd}\\
\eta^2\bsM_{2}&\le [\rho(L)+(2\kappa_1^2+1)\rho(L^2)+1]\kappa_2^2\bsK.\label{zo_pb:zerosg:m2-rand-pd}\\
b^0_{2}&\ge\frac{1}{2}(\kappa_2 - 5\kappa_2^2).\label{zo_pb:zerosg:vkLya-b1}
\end{align}

From~\eqref{zo_pb:zerosg:vkLya}--~\eqref{zo_pb:zerosg:vkLya-b1}, let $\kappa_4 = [(\kappa_1 - 1)\rho_2(L) - 1]\kappa_2- [\rho(L)+(2\kappa_1^2+1)\rho(L^2)+1]\kappa_2^2 $ we know that~\eqref{zo_pb:zerosg:sgproof-vkLya2T} holds.

Similar to the way to get~\eqref{zo_pb:zerosg:sgproof-vkLya2T}, we have~\eqref{zo_pb:zerosg:v4kspeed}.

\end{proof}

\end{document}